 \newtheorem{res}{Result}[section]
 \newtheorem{theorem}[res]{Theorem}
\newtheorem{remark}[res]{Remark}
 \newtheorem{lemma}[res]{Lemma}
 \newtheorem{corollary}[res]{Corollary}
\newtheorem{example}[res]{Example}
\numberwithin{equation}{section}
\def\N{{\mathbb N}}
\def\Z{{\mathbb Z}}
\def\F{{\mathcal F}}
\def\R{{\mathbb R}}
\def\E{{\mathbb E}}
\def\P{{\mathbb P}}
\def\hP{\hat\P}
\def\defto{\buildrel {\mathrm{def}}\over =}
\def\tto{\buildrel {t\rightarrow \infty}\over\longrightarrow}
\def\state{S}
\def\cstate{C}
\def\tX{\tilde X}
\def\tP{\tilde P}
\def\hX{\hat X}
\def\hP{\hat P}
\def\augstate{\hat S}
\def\hold{{H}}
\def\ret{{R}}
\def\cq{{q_0}}
\def\Pinf{P^\infty}
\def\Xinf{X^\infty}
\def\qinf{q^\infty}
\def\tg{\tilde {g}}
\def\trho{\tilde {\rho}}
\def\tai{\tau^{(i)}}
\def\taone{\tau^{(1)}}
\def\taze{\tau^{(0)}}
\def\thold{\tilde\hold}
\def\ts{s^\cstate}
\begin{document}
\bibliographystyle{plain}
\title[limiting waiting at zero]{markov chains conditioned never to wait too long at the origin}
\author{Saul Jacka}
\address{Department of Statistics, University of Warwick, Coventry CV4
 7AL, UK} \email{s.d.jacka@warwick.ac.uk}

\begin{abstract}
Motivated by Feller's coin-tossing problem, we consider the problem of
conditioning an irreducible Markov chain never to wait too long at 0. Denoting  by $\tau$ the
first time that the chain, $X$, waits for at least one unit of time at the
origin, we consider conditioning the chain on the event
$(\tau>T)$. We show there is a weak limit as $T\rightarrow \infty$ in the
cases where either the statespace is finite or $X$ is transient. We
give sufficient conditions for the existence of a weak limit in other
cases and show that we have vague convergence to a defective limit if
the time to hit zero has a lighter tail than $\tau$ and $\tau$ is subexponential.

\end{abstract}
\thanks{{\bf Key words: SUBEXPONENTIAL TAIL; EVANESCENT PROCESS;
FELLER'S COIN-TOSSING CONSTANTS; HITTING PROBABILITIES; CONDITIONED PROCESS}
}

\thanks{{\bf AMS 2000 subject classifications:} Primary 60J27; secondary 60J80, 60J50, 60B10}
\thanks{I am grateful to an anonymous referee for many helpful suggestions on improving the presentation of this paper}
\maketitle \centerline{Submitted: 21 June 2009}

\section{Introduction and notation}
\subsection{Introduction} 
Feller (in Section XIII.7 of \cite{f1}) showed that if $p^{(k)}_n$ is the
probability that there is no run of heads of length $k$  or
more in $n$ tosses of a fair coin, then, for a suitable positive constant $c_k$
$$
p^{(k)}_n\sim c_ks_k^{n+1},
$$
where $s_k$ is the largest real root in (0,1) of the equation
\begin{equation}\label{coin}
x^k-\sum_{j=0}^{k-1}2^{-(j+1)}x^{k-1-j}=0.
\end{equation}

More generally, if the probability of a head is $p=1-q$, then the same
asympotic formula is valid, with equation (\ref{coin}) modified to
become
\begin{equation}\label{coin2}
x^k-q\sum_{j=0}^{k-1}p^{j}x^{k-1-j}=0,
\end{equation}
and $c_k=\frac{s_k-p}{q((k+1)s_k-k)}$.

The continuous-time analogue of this question is to seek the asymptotic
behaviour of the probability that $Y$, a Poisson process with rate
$r$, has no inter-jump time exceeding one unit by time $T$.
It follows, essentially  from Theorem \ref{rec1} that, denoting  by $\tau_Y$ the first time that
$Y$ waits to jump longer than one unit of time,
\begin{equation}\label{coin3}
\P(\tau_Y>t)\sim c_r e^{-{\phi_r} t},
\end{equation}
for a suitable constant $c_r$, where ${\phi_r}=1$ if $r=1$ and otherwise
${\phi_r}$ is the root (other than $r$ itself) of the equation
\begin{equation}\label{coin4}
xe^{-x}=r e^{-r}.
\end{equation}

A natural extension is then to seek the tail behaviour of the distribution of $\tau\equiv \tau_X$,
the first time that a Markov chain, $X$, waits longer than one unit of
time at a distinguished state,
0. In general, there has also been much interest (see \cite{JW}, \cite{JW2}, \cite{JW3}, \cite{Ber}, \cite{Ber2},
 \cite{Ber3}, \cite{Ferr}, \cite{Kes}, \cite{JR}, \cite{JR2}, \cite{JR3}, \cite{JR4}, \cite{Kyp})
in conditioning an evanescent
Markov process $X$ on
its survival time being increasingly large and in seeing whether a weak limit exists.

\goodbreak
\subsection{Notation}
We consider a continuous-time Markov chain $X$
on a countable state\-space
$S$, with a distinguished state $\partial$. We denote $S\setminus
\{\partial\}$ by $C$. For convenience, and without loss of generality, we assume henceforth that
$S=\Z^+$ or $S=\{0,\ldots,n\}$ and
$\partial=0$ so that $\cstate=\N$ or $\cstate=\{1,\ldots, n\}$.

We assume that $X$ is irreducible, and non-explosive. We denote the
transition semi-group of $X$ by $\{P(t); {t\geq 0}\}$ and its $Q$-matrix by $Q=(q_{ij})$ and set $q_i=-q_{ii}$. We define
the process $\tX$ as $X$ killed on first hitting 0 and we shall usually
assume that $\tX$ is also irreducible on $\cstate$. We denote the substochastic
semigroup for $\tX$ by $(\tP(t))_{t\geq 0}$.
We first define the return and departure epochs as follows:
$$S_0=\inf\{t\geq 0:\; X_t=0\},\hbox{ }T_0=\inf\{t\geq 0:\; X_t\neq 0\}$$
and then for $n\geq 1$:
$$T_n=\inf\{t\geq S_n:\; X_t\neq 0\},$$
and 
$$S_n=\inf\{t\geq T_{n-1}:\; X_t= 0\},$$
then we define the successive holding and return times $H^n_{n\geq 0}$
and $R^n_{n\geq 0}$ by
$$H^0=T_0\hbox{ and }R^0=S_0$$
and
$$H^n=T_n-S_n\hbox{ and }R^n=S_{n+1}-T_n\hbox{ for }n\geq 1.$$
Then we define the
current holding time as follows:
$$\hold_t=t-S_n\hbox{ if }S_n\leq t<T_n\hbox{ for some }n.$$
It will be convenient in what follows to define  
$$\hold_t=\emptyset\hbox{ if }X_t\neq 0.$$

We denote the first time that $X$ waits in 0 for time 1 by $\tau$, i.e.
$$\tau=\inf\{t:\; H_t\geq 1\},$$
and
denote the process $X$ killed at time $\tau$ by $\hX$. We denote the 
statespace augmented by the current holding time in 0 by
$\augstate\defto \cstate\cup\{\{0\}\times[0,1)\}$. By a slight abuse of notation, we denote the (substochastic)
Markov chain $(\hX_t, H_t)$ on the statespace
$\augstate$ by $\hX$ also\footnote{Note that if $\hat X_t\neq
0$ then $H_t=\emptyset$ so $(\hat X_t,H_t)= \hat X_t$.}.
The associated semigroup is
denoted $(\hP(t))_{t\geq 0}$.
Throughout the rest of the paper we denote by $\P_i$ the probability on
Skorokhod
pathspace $D(\state,[0,\infty))$, conditional on $\hX_0=i$, and the
corresponding filtration by $(\F_t)_{t\geq 0}$.
Finally, we denote a typical hitting time of 0 from state $i$ by $\tai_0$
and its density by $\rho_i$. We denote the density of a typical return time,   $\ret^1$, by $\rho$.

\subsection{Convergence/decay parameters for evanescent chains}
\bigskip
We recall (see for example \cite{JR}) that, if $X^*$ is a
Markov chain on $\cstate$, with substochastic transition semigroup $P^*$ and $Q$-
matrix $Q^*=(q^*_{ij})_{(i,j)\in C\times C}$, then $X^*$ is said to be evanescent if it is irreducible
and dies with probability one. In that case,
we define
$$
\alpha_{X^*}=\alpha=\inf\{\lambda\geq 0:\,\int_0^\infty P^*_{ij}(t)e^{\lambda t}dt
=\infty\}
$$
for any $i,j\in \cstate$, and (see, for example, Seneta and Vere-Jones
\cite{Se2})  $X^*$ is classified as $\alpha$-recurrent or $\alpha$-transient
depending on whether $\int_0^\infty P^*_{ij}(t)e^{\alpha t}dt
=\infty$ or is finite. Moreover,
$X^*$ is $\alpha$-recurrent if and only if $\int_0^\infty f^*_{ii}(t)e^{\alpha t}dt
=1,$
where $f^*_{ii}$ is the defective density of the first return time to $i$
(starting in $i$).

In the $\alpha$-recurrent case, $X^*$ is
$\alpha$-positive recurrent if $$\int_0^\infty tf^*_{ii}(t)e^{\alpha
t}dt<\infty,$$
otherwise $X^*$ is
$\alpha$-null recurrent.
Defining $q^*_i=-q^*_{ii}$,
it is easy to see that $\alpha<q^*_i$ for all $i\in \N$ and hence
$$0\leq\alpha\leq \inf_iq^*_i.$$

Thus $\alpha$ measures the rate of
decay of transition probabilities (in $\cstate$).
There is a second decay parameter --- $\mu^*$, which
measures the rate of dying.

We define $\tau^*$ as the death time of $X^*$ and we define
$s^*_i(t)=\sum_jP^*_{ij}(t)=\P_i(\tau^*>t)$ and set
$$\mu^*=\inf\{\lambda:\,\int_0^\infty s^*_{i}(t)e^{\lambda t}dt
=\infty\}.$$

Notice that $\mu^*$ is independent of $i$ by the usual irreducibility
argument, moreover, since $1\geq s^*_i(t)\geq P^*_{ii}(t)$ it follows that
$$0\leq \mu^*\leq \alpha^*.$$

Note that in our current setting, we shall take $X^*=\tX$ and write
$\tau^*=\tau_0$, the first hitting time of 0. We shall denote the
rate of hitting 0, which is the death rate for $X^*$, by $\mu^\cstate$
and $\alpha^*$ by $\alpha^\cstate$ and the survival probabilities for $\tX$ as
$\ts$, so that $\ts_i(t)=\P_i(\tau_0>t)$.

\subsection{Doob $h$-transforms}
Recall (see, for example, III.49 of Williams \cite {R+W}) that
we may form the $h$-transform of a substochastic Markovian semigroup on
$\state$, 
$(P(t))_{t\geq 0}$, if $h:S\rightarrow\R^+$ is
$P$-superharmonic (i.e. $[P(t)h](x)\leq h(x)$ for all $x\in \state$ and for all $t\geq 0$).
The $h$-transform of $P$, $P^h$, is specified by its transition kernel which
is given by
$$P^h(x,dy;t)\defto \frac{h(y)}{h(x)}P(x,dy;t),$$
so that if we consider the corresponding substochastic measures on path-space,
$\P_x$ and $\P^h_x$ (conditional on $X_0=x$) then
$$\frac{d\P^h_x}{d\P_x}\big|_{\F_t}=h(X_t)
$$
and $P^h$ forms another
substochastic Markovian semigroup.
If $h$ is actually {\em space-time} $P$-superharmonic then appropriate changes
need to be made to these definitions. In particular, if $h(x,t)=e^{\phi
t}h_x$ then
$$
\frac{d\P^h_x}{d\P_x}\big|_{\F_t}=e^{\phi
t}h_{X_t}.
$$
As shown in \cite{JR}, in general, when a weak limit or a vague limit
exists for the problem of interest, it must be a Doob-$h$-transform
of the original process, with the state augmented
by the current waiting time in state 0 in the case we study here.

\subsection{Main results}
Denoting by $\hP(t)$ the substochastic transition semigroup for $\hX$, we define
$$s_i(t)\defto \P_i(\tau>t)=\hP(i,\augstate;t)\hbox{ for }i\in\augstate.$$
Our first result is
\begin{theorem}\label{trans}
Suppose that $X$ is transient. Denote $\P_i(X\hbox{ never hits 0})$ by
$\beta_i$ and
define $\Delta=\sum_{j\in \cstate}\frac{q_{0,j}\beta_j}{\cq}.$
Set
\begin{equation}\label{trans1}
p_{(0,0)}\defto p_0=\frac{(1-e^{-\cq})\Delta}{e^{-\cq}+(1-e^{-
\cq})\Delta},
\end{equation}

\begin{equation}\label{trans2}
p_{(0,u)}=\frac{1-e^{-\cq (1-u)}}{1-e^{-\cq}}p_0,
\end{equation}
and 
\begin{equation}\label{trans3}
p_i=\beta_i+(1-\beta_i)p_0\hbox{ for }i\in\cstate.
\end{equation}
Then
\begin{equation}\label{translimit}
s_i(t)\tto p_i\hbox{ for all }i\in\augstate.
\end{equation}
Hence, if we condition $X$ on $\tau=\infty$ we obtain a new Markov
process, $X^\infty$, on $\augstate$  with honest semigroup $\Pinf$ given
by
\begin{equation}\label{cond1}
\Pinf_{i,j}(t)=\frac{p_j}{p_i}\hP_{i,j}(t)\hbox{ for }j\in \cstate
\end{equation}
and
\begin{equation}\label{cond2}
\Pinf_i((0,du);t)=\frac{p_{(0,u)}}{p_i}\hP_i((0,du);t),
\end{equation}
so that
$\Xinf$ looks like a Markov chain with $Q$-matrix given by
$\qinf_{i,j}=\frac{p_j}{p_i}q_{i,j}$ on $\cstate$, whilst $\Xinf$ has a
holding time in 0 with density $d$ given by
$$d(t)=\frac{e^{-\cq t}}{\int_0^1 e^{-\cq s}ds}1_{(t<1)}$$
and a tine-homogeneous jump probability out of state 0 to state $j$ of
$\frac{q_{0,j}p_j}{\cq p_0}$.
\end{theorem}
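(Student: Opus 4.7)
The approach is to first observe that, by monotone convergence from $\{\tau>t\}\downarrow\{\tau=\infty\}$,
$$s_i(t)=\P_i(\tau>t)\;\longrightarrow\;\P_i(\tau=\infty)$$
for every $i\in\augstate$, so the convergence claim (\ref{translimit}) is equivalent to identifying the proposed constants $p_i$ and $p_{(0,u)}$ with these infinite-survival probabilities. I would do this by first-excursion analysis from state $0$.

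Starting at $0$, let $W\sim\exp(\cq)$ be the initial holding time. With probability $e^{-\cq}$ we have $W\geq 1$ and $\tau$ occurs in the first sojourn; with complementary probability the chain jumps at time $W<1$ to some $j\in\cstate$ with probability $q_{0,j}/\cq$, after which either it escapes to infinity (probability $\beta_j$, giving $\tau=\infty$) or it returns to $0$ and the argument restarts. Setting $q=\P_0(\tau<\infty)$ yields the fixed-point equation
$$q=e^{-\cq}+(1-e^{-\cq})(1-\Delta)q,$$
which rearranges to (\ref{trans1}) for $p_0=1-q$. Formula (\ref{trans3}) follows from the dichotomy ``never hit $0$'' versus ``hit $0$ and then survive''. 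For the augmented state $(0,u)$, the memoryless property makes the residual holding time again $\exp(\cq)$, so the same branching argument gives $p_{(0,u)}=(1-e^{-\cq(1-u)})\sum_{j\in\cstate}(q_{0,j}/\cq)p_j$; using the identity $\sum_j(q_{0,j}/\cq)p_j=\Delta+(1-\Delta)p_0$, which simplifies through the explicit form of $p_0$ to $p_0/(1-e^{-\cq})$, this reduces to (\ref{trans2}).

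For the conditional process, one application of the strong Markov property at time $t$ gives
$$[\hP(t)p](i)=\E_i[\P_{\hX_t}(\tau=\infty);\tau>t]=\P_i(\tau=\infty)=p_i,$$
so $p$ is $\hP$-harmonic and the Doob $h$-transform with $h=p$ yields an honest semigroup on $\augstate$. A standard argument (cf.\ \cite{JR}) identifies this $h$-transform with the regular conditional law of $\hX$ given $\{\tau=\infty\}$, establishing (\ref{cond1})--(\ref{cond2}). Unpacking this kernel gives the pathwise description of $\Xinf$: on $\cstate$ the rates become $\qinf_{i,j}=(p_j/p_i)q_{i,j}$; at $(0,u)$ the transformed survival probability of the residual holding time past $t$ is $(p_{(0,u+t)}/p_{(0,u)})e^{-\cq t}$, whose negative $t$-derivative, specialised at $u=0$, yields the truncated-exponential density $d$; and the destination law at the jump out of $0$ follows from dividing the transformed jump rates $(p_j/p_{(0,u)})q_{0,j}$ by their total. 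The main technical nuisance is really only the algebraic cancellation behind (\ref{trans2}); everything else is routine book-keeping, modulo verifying that $(\hX,H)$ is genuinely Markov on $\augstate$ (immediate from the definition) and that $p>0$ there (immediate from transience of $X$).
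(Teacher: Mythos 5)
Your argument is correct and is essentially the paper's own proof: the first-exit/first-return decomposition yielding the linear system $p_i=\beta_i+(1-\beta_i)p_0$ and $p_0=(1-e^{-\cq})\sum_{j\in\cstate}\frac{q_{0,j}}{\cq}p_j$ (which the paper dismisses as trivial), combined with the observation that $s_i(t)\tto\P_i(\tau=\infty)$ and that conditioning on the positive-probability event $(\tau=\infty)$ is exactly the Doob $h$-transform of $\hP$ by the harmonic function $p$. As a minor aside, your normalised exit law $\frac{(1-e^{-\cq})\,q_{0,j}p_j}{\cq p_0}$ is the correctly normalised one, since the constant $\frac{q_{0,j}p_j}{\cq p_0}$ displayed in the theorem does not sum to one over $j\in\cstate$ (it should be paired with the untruncated density $e^{-\cq t}$ rather than with $d$).
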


In the case where $X$ is recurrent, it is clear that
$s_i(t)\tto 0$ for each $i\in\augstate$.

Now let $W\defto \hold^1+\ret^1$ (so
that $W$ is the first return time of $X$ to 0 from 0) and let $g$ be the
(defective)
density of $W 1_{(\hold^1<1)}$ on $(0,\infty)$.
Our first result under these conditions is as follows. It is a
generalisation to our more complex setting of Seneta and Vere-Jones'
result in the $\alpha$-positive case.

\begin{theorem}\label{rec1}
Let
$$I(\lambda)\defto \int_0^\infty e^{\lambda t}g(t)dt=\E e^{\lambda W}1_{(\hold^1<1)},$$
then if
\begin{equation}\label{condn2}
\hbox{there exists a $\phi$ such that }I(\phi)=1,\hbox{ and }I'(\phi-
)<\infty,
\end{equation}
then for each $i\in\augstate$,
$$e^{\phi t}s_i(t)\tto p_i>0,$$
where the function $p$ is given by
\begin{equation}\label{p1}
p_{(0,0)}=\frac{e^{\phi-\cq}}{\phi I'(\phi-)}\defto \kappa,
\end{equation}
\begin{equation}\label{p2}
p_{(0,u)}=\frac{\int_0^{1-u}e^{(\phi-\cq)s}ds}{\int_0^{1}e^{(\phi-
\cq)s}ds}\kappa,
\end{equation}
and, for $i\neq 0$,
\begin{equation}\label{p3}
p_i=F_{i,0}(\phi)\kappa,
\end{equation}
where
\begin{equation}\label{F}
F_{i,0}(\lambda)\defto \E e^{\lambda \tai_0}=\int_0^\infty e^{\lambda t}\rho_i(t)dt.
\end{equation}
\end{theorem}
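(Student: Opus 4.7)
The plan is to set up a renewal equation for the base probability $s_{(0,0)}(t)$ by decomposing on the first return cycle away from 0, convert it to a proper renewal equation via an Esscher-type exponential tilt (using $I(\phi)=1$), apply the key renewal theorem to obtain $\kappa$, and then extend to the remaining states of $\augstate$ by first-passage decomposition.

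Starting at $(0,0)$, the first holding time at 0 is distributed as $H^1\sim\mathrm{Exp}(\cq)$, and the first return to state $(0,0)$ occurs at time $W = H^1+R^1$. Conditioning on the first cycle yields three cases: on $\{H^1\geq 1\}$ (probability $e^{-\cq}$) we have $\tau=1$; on $\{H^1<1,\,W>t\}$ we have $\tau>t$ automatically (since $H_s<1$ throughout the first cycle); and on $\{H^1<1,\,W\leq t\}$ the chain renews at state $(0,0)$ at time $W$. Collecting these gives
\begin{equation*}
s_{(0,0)}(t) \;=\; e^{-\cq}\mathbf{1}_{\{t<1\}} + \P\bigl(H^1<1,\,W>t\bigr) + \int_0^t g(u)\,s_{(0,0)}(t-u)\,du.
\end{equation*}
Setting $\tilde s(t)=e^{\phi t}s_{(0,0)}(t)$, $\tilde g(u)=e^{\phi u}g(u)$, and letting $\tilde h(t)$ denote $e^{\phi t}$ times the forcing, this becomes $\tilde s=\tilde h+\tilde g\ast\tilde s$; by (\ref{condn2}), $\tilde g$ is an honest probability density with mean $I'(\phi-)<\infty$. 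A Fubini calculation collapses the forcing integral neatly,
\begin{equation*}
\int_0^\infty\tilde h(u)\,du \;=\; \frac{e^{-\cq}(e^\phi-1)}{\phi}+\frac{I(\phi)-\P(H^1<1)}{\phi}\;=\;\frac{e^{\phi-\cq}}{\phi},
\end{equation*}
and $\tilde h$ is continuous, bounded, and integrable on $[0,\infty)$ (the tail being controlled by $e^{\phi t}\P(R^1>t-1)$, which integrates since $\E[e^{\phi R^1}]<\infty$), hence directly Riemann integrable. The key renewal theorem then delivers $\tilde s(t)\to e^{\phi-\cq}/(\phi I'(\phi-))$, which is (\ref{p1}).

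For initial condition $(0,u)$ with $u\in(0,1)$, the memoryless property gives an analogous decomposition in which the residual holding time is $\mathrm{Exp}(\cq)$ clipped at $1-u$; passing to the limit inside the convolution via dominated convergence, and using $I(\phi)=1$ to cancel $\E[e^{\phi R^1}]$, yields (\ref{p2}). For $i\in\cstate$, a first-passage decomposition on the first hit of $0$ gives
\begin{equation*}
s_i(t) \;=\; \ts_i(t) + \int_0^t \rho_i(u)\,s_{(0,0)}(t-u)\,du,
\end{equation*}
and the same tilt-and-dominate argument produces $p_i=F_{i,0}(\phi)\,\kappa$, which is (\ref{p3}).

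The main technical step is the application of the key renewal theorem, essentially automatic once (\ref{condn2}) is in hand, together with the Fubini identification of the limit constant. A subsidiary but essential check is that $F_{i,0}(\phi)<\infty$ for every $i\in\cstate$, needed for dominated convergence in the last step; this follows from the factorisation $I(\phi)=\E[e^{\phi H^1}\mathbf{1}_{\{H^1<1\}}]\cdot\E[e^{\phi R^1}]=1$, which forces $\E[e^{\phi R^1}]=\sum_j(q_{0,j}/\cq)F_{j,0}(\phi)$ to be finite, so that $F_{j,0}(\phi)<\infty$ for every neighbour $j$ of $0$ and, by irreducibility of $\tX$, for every $i\in\cstate$.
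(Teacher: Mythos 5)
Your proposal is correct and follows essentially the same route as the paper: the identical renewal equation for $s_{(0,0)}$, the exponential tilt using $I(\phi)=1$, Feller's (key) renewal theorem with the same Fubini evaluation of the constant $e^{\phi-\cq}/\phi$, and the same first-passage and residual-holding-time decompositions for $i\in\cstate$ and $(0,u)$. Your closing verification that $F_{i,0}(\phi)<\infty$ (via the factorisation of $I(\phi)$ and irreducibility of $\tX$) is a worthwhile supplement, since the paper asserts this finiteness without comment.
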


The following simple condition ensures that condition
(\ref{condn2}) holds.
\begin{lemma}\label{rec2}
Suppose that $\tX$ is $\alpha$-recurrent, and both $N_0\defto \{i:\,
q_{i,0}>0\}$ and $N^*_0\defto \{i:\, q_{0,i}>0\}$ are finite, then (\ref{condn2}) holds. 
\end{lemma}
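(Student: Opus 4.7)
The plan is to factor $I(\lambda)$ via the strong Markov property at the first departure time $T_1$, then use the $\alpha$-recurrence of $\tX$ together with the finiteness of $N_0$ and $N^*_0$ to show that $I$ attains the value $1$ strictly inside its abscissa of convergence, at a point where $I$ is analytic.

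First I would exploit that $\hold^1$ is exponential of rate $\cq$ and independent of both $X_{T_1}$ and of the post-$T_1$ trajectory to write
$$I(\lambda)=\underbrace{\left(\int_0^1 \cq\, e^{(\lambda-\cq)t}\,dt\right)}_{=:A(\lambda)}\cdot \underbrace{\left(\sum_{j\in N^*_0}\frac{q_{0,j}}{\cq}F_{j,0}(\lambda)\right)}_{=:B(\lambda)}.$$
The factor $A$ is entire with $A(0)=1-e^{-\cq}$, so $I(0)=1-e^{-\cq}<1$. For $B$ I would then express each $F_{j,0}$ in terms of $\tX$: since $X$ must be at some $k\in N_0$ immediately before jumping into $0$, the hitting-time density under $\P_j$ is $\rho_j(t)=\sum_{k\in N_0}\tP_{j,k}(t)q_{k,0}$, whence
$$F_{j,0}(\lambda)=\sum_{k\in N_0}q_{k,0}\int_0^\infty e^{\lambda t}\tP_{j,k}(t)\,dt.$$

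The $\alpha$-recurrence hypothesis now enters twice. First, evanescence of $\tX$ forces the return density $f^*_{ii}$ to be strictly defective, so the identity $\int f^*_{ii}(t)e^{\alpha t}\,dt=1$ can only hold for $\alpha>0$. Second, by irreducibility of $\tX$ on $\cstate$, $\alpha$-recurrence delivers $\int_0^\infty e^{\alpha t}\tP_{j,k}(t)\,dt=\infty$ for every pair $j,k\in\cstate$, while each such integral is finite for $\lambda<\alpha$ by the definition of the decay parameter. Because $N_0$ and $N^*_0$ are finite, the finite-sum formulas for $F_{j,0}$ and $B$ inherit these properties: $B$ is finite on $(-\infty,\alpha)$ and $B(\lambda)\uparrow\infty$ as $\lambda\uparrow\alpha$ by monotone convergence. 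Hence $I=AB$ is analytic on $(-\infty,\alpha)$ (as the Laplace transform of the non-negative density $g$, with abscissa of convergence $\alpha$), satisfies $I(0)<1$, and tends to $\infty$ as $\lambda\uparrow\alpha$. The intermediate value theorem then produces $\phi\in(0,\alpha)$ with $I(\phi)=1$, and analyticity of $I$ at $\phi$ gives $I'(\phi-)=I'(\phi)<\infty$, which is (\ref{condn2}).

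The main obstacle is identifying the abscissa of convergence of $F_{j,0}$ as exactly $\alpha$, rather than some smaller value. The finiteness of $N_0$ is essential here: without it, the sum $\sum_{k\in N_0}q_{k,0}\int e^{\lambda t}\tP_{j,k}(t)\,dt$ could diverge well before $\lambda$ reaches $\alpha$, and $|N^*_0|<\infty$ plays the analogous role in passing from $F_{j,0}$ to $B$. The positivity of $\alpha$, while a short argument, is equally crucial: it is what makes $(0,\alpha)$ a non-trivial interval on which to apply the intermediate value theorem.
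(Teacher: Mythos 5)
Your proof is correct, and its overall skeleton matches the paper's: pin down the abscissa of convergence of $I$ as $\alpha^\cstate$, note $I(0)<1$ and $I(\lambda)\uparrow\infty$ as $\lambda\uparrow\alpha^\cstate$, and finish by monotonicity/intermediate value plus analyticity strictly inside the abscissa. Where you genuinely differ is at the crux, namely why $F_{i,0}(\lambda)<\infty$ for all $\lambda<\alpha^\cstate$: the paper obtains this by citing Theorem 3.3.2 of \cite{JR3} (finiteness of $N_0$ forces $\alpha^\cstate=\mu^\cstate$) and then passing through the survival probabilities $\ts_i$, whereas you prove it directly from the first-entrance identity $\rho_j(t)=\sum_{k\in N_0}q_{k,0}\tP_{j,k}(t)$, so that finiteness of $N_0$ transfers the defining property of $\alpha^\cstate$ (finiteness of $\int_0^\infty e^{\lambda t}\tP_{j,k}(t)\,dt$ for $\lambda<\alpha^\cstate$, divergence at $\alpha^\cstate$ by $\alpha$-recurrence) to each $F_{j,0}$, with $N^*_0$ finite then handling the sum defining $I$ exactly as in the paper. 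Your route is self-contained and in effect reproves the half of the cited result that is actually needed here; the paper's route is shorter on the page but leans on the external theorem. You also spell out two points the paper treats as trivial: that evanescence of $\tX$ forces $\alpha^\cstate>0$ (via strict defectiveness of the return density, so the interval $(0,\alpha^\cstate)$ is nontrivial), and that analyticity of the Laplace transform strictly below its abscissa gives $I'(\phi-)<\infty$. A cosmetic remark only: for $I(0)<1$ you can simply use $I(0)\leq\P(\hold^1<1)=1-e^{-\cq}$, without invoking $F_{j,0}(0)=1$, though evanescence supplies the latter anyway.
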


\begin{corollary}\label{cor1}
Let $X^T$ denote the chain on $\augstate$ obtained by conditioning $\hX$
on the event $(\tau >T)$, then, if condition (\ref{condn2}) holds,
for each $s>0$, the restriction of the law of $X^T$ to
$\F_s$ converges weakly as $T\rightarrow \infty$ to that of $\Xinf$ restricted to $\F_s$, where
the transition semigroup of $\Xinf$ is given by equations (\ref{cond1})
and  (\ref{cond2}).
\end{corollary}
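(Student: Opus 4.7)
My plan is to write down the Radon--Nikodym density of the conditioned law on $\F_s$, show pointwise convergence via Theorem \ref{rec1}, and then apply Scheffe's lemma. Write $\P_i^T(\,\cdot\,)\defto\P_i(\,\cdot\mid\tau>T)$. For $T>s$ the Markov property of $\hat X$ at time $s$ gives
\begin{equation*}
f_T\;\defto\;\frac{d\P_i^T}{d\P_i}\bigg|_{\F_s}
\;=\;\frac{s_{\hat X_s}(T-s)}{s_i(T)}\,\mathbf 1_{(\tau>s)},
\qquad \E_i f_T=1.
\end{equation*}
Multiplying numerator and denominator by $e^{\phi T}$ and invoking Theorem \ref{rec1} at the random starting point $\hat X_s(\omega)\in\augstate$ yields, $\P_i$-a.s.,
\begin{equation*}
f_T\;\longrightarrow\; f_\infty\;\defto\;e^{\phi s}\,\frac{p_{\hat X_s}}{p_i}\,\mathbf 1_{(\tau>s)}\qquad(T\to\infty).
\end{equation*}

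Once the identity $\E_i f_\infty=1$ is established, Scheffe's lemma for non-negative integrands upgrades this a.s.\ convergence to $L^1(\P_i)$ convergence, which is total-variation (and hence weak) convergence of $\P_i^T|_{\F_s}$ to the probability measure $\mathbf Q_i$ with density $f_\infty$. One then identifies $\mathbf Q_i$ with $\Pinf_i|_{\F_s}$ by inspection: $f_\infty$ is exactly the space-time Doob density associated with $h(x,t)=e^{\phi t}p_x$, and reading off (\ref{cond1})--(\ref{cond2}) shows that $\Pinf$ is the $h$-transform of $\hat P$ by this same $h$, in the sense recalled in the ``Doob $h$-transforms'' subsection above.

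The hard part is the identity $\E_i f_\infty=1$, which is equivalent to the space--time eigen-relation $e^{\phi s}\hat P(s)p=p$, or equivalently to honesty of the transformed semigroup $\Pinf$. Fatou applied to the Chapman--Kolmogorov identity
\begin{equation*}
e^{\phi T}s_i(T)=e^{\phi s}\,\E_i\!\left[e^{\phi(T-s)}s_{\hat X_s}(T-s)\,\mathbf 1_{(\tau>s)}\right]
\end{equation*}
combined with Theorem \ref{rec1} instantly yields $\hat P(s)p\le e^{-\phi s}p$; to obtain the reverse inequality I would promote Fatou to dominated convergence using a uniform-in-$j$ bound of the form $\sup_{t\ge 0}e^{\phi t}s_j(t)\le C\,F_{j,0}(\phi)$ (with $F_{j,0}$ as in (\ref{F})), which is supplied by the renewal-theoretic estimates underlying the proof of Theorem \ref{rec1}. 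This domination is the single non-routine step; every other ingredient in the argument is standard Markov-conditioning bookkeeping combined with Scheffe.
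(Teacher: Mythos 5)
Your proposal is sound, but it takes a genuinely different route at the crucial step. The paper disposes of the convergence mechanism by quoting Theorem 4.1.1 of \cite{JR} --- which encapsulates exactly your Radon--Nikodym-density-plus-Scheff\'e argument --- and reduces the corollary to checking that $h:(i,t)\mapsto e^{\phi t}p_i$ is space-time $\hP$-harmonic; that check is done \emph{directly}, by optional stopping at the first exit from $0$ and the first hitting time of $0$ (using the explicit forms $p_i=\kappa F_{i,0}(\phi)$ and (\ref{p2}), together with non-explosivity of $\hX$), not by a limit interchange. You instead re-prove the Scheff\'e mechanism from scratch and obtain harmonicity analytically: Fatou for superharmonicity, then dominated convergence in the Chapman--Kolmogorov identity for the reverse inequality. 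This does close: since $\phi>0$, the representation $e^{\phi t}s_j(t)=e^{\phi t}\int_t^\infty\rho_j(u)du+\int_0^t e^{\phi u}\rho_j(u)\,e^{\phi(t-u)}s_{(0,0)}(t-u)\,du$ from the proof of Theorem \ref{rec1}, together with boundedness of $e^{\phi t}s_{(0,0)}(t)$ (the solution of a renewal equation with directly Riemann integrable forcing), gives your bound $\sup_t e^{\phi t}s_j(t)\le CF_{j,0}(\phi)$; but you should also record the analogous uniform (constant) bound for starting points $(0,u)$, since $\hX_s$ may lie in $\{0\}\times[0,1)$, and you should say explicitly that the dominating function $CF_{\hX_s,0}(\phi)1_{(\tau>s)}$ is integrable \emph{because} of the Fatou half (as $p_j=\kappa F_{j,0}(\phi)$ on $\cstate$), otherwise the DCT step is unjustified --- the argument is not circular, but the order of deductions matters. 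In comparison, the paper's optional-stopping check is shorter and exploits the closed forms of $p$, at the price of citing \cite{JR}; your version is self-contained but needs these extra uniform estimates extracted from the renewal analysis. One presentational point: your limit density carries the factor $e^{\phi s}$, i.e.\ the limit law is the \emph{space-time} $h$-transform with $h(x,t)=e^{\phi t}p_x$; equations (\ref{cond1})--(\ref{cond2}) are written for the transient case where the exponential factor is absent, and your reading of them as this space-time transform is the intended one.
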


In the case where $I(\phi)<1$ or $I'(\phi-)=\infty$, Theorems \ref{subexp1},
\ref{subexp2} and \ref{subexp3} (may)  apply,
giving some sufficient conditions for weak or vague convergence to take
place.
In Theorem \ref{subexp4} and Corollary \ref{bd2}, we give an application to
the case of a recurrent birth and death process conditioned not to wait
too long in state 0.
\goodbreak

\section{Proof of the transient and $\alpha$-positive cases}
To prove Theorem \ref{trans} is straightforward.

{\em Proof of Theorem \ref{trans}}
It is trivial to establish the equations
\begin{equation}
p_i=\beta_i+(1-\beta_i)p_0
\end{equation}
and
\begin{equation}
p_0=(1-e^{-\cq})\sum_{j\in \cstate}\frac{q_{0,j}}{\cq}p_i.
\end{equation}
Equations (\ref{trans1})-(\ref{trans3}) follow immediately. Then the
conditioning result follows straightforwardly
\hfill$\square$

\begin{example}
\label{eg1}
We take a transient nearest-neighbour random walk with
reflection at 0 and with up-jump rate of $b$ and down-jump rate of $d$. Note that $1-\beta$
is the minimal positive solution to
$P(t)h=h$ with $h(0)=1$, and that $1-\beta_i=(\frac{d}{b})^i$.
\end{example}

The main tool in the proof of Theorem \ref{rec1} is the Renewal Theorem.

{\em Proof of Theorem \ref{rec1}}
Recall that state $(0,u)$ denotes that the killed chain is at 0 and
its current holding time is $u$.
First note that $s_{(0,0)}$ satisfies the renewal equation
\begin{equation}\label{renew1}
s_{(0,0)}(t)=\bigl(1-\int_0^\infty g(u)du\bigr)1_{(t<1)}+\int_t^\infty g(u)du+\int_0^t g(u)s_{(0,0)}(t-u)du.
\end{equation}
If we define
$$f(t)=e^{\phi t}s_{(0,0)}(t),$$
it follows immediately from (\ref{renew1}) that
\begin{equation}\label{renew2}
f(t)=e^{\phi t}\bigl((1-\int_0^\infty g(u)du)1_{(t<1)}+\int_t^\infty g(u)du\bigr)+\int_0^t \tilde g(u)f(t-u)du,
\end{equation}
where
$\tilde g(t)\defto e^{\phi t}g(t).$ Now, it is easy to check that the conditions of Feller's alternative
formulation of the Renewal Theorem (see XI.1 of \cite{Feller}, p.363) are satisfied, so we conclude that
\begin{equation}\label{lim1}
f(t)\tto \mu^{-1}\int_0^\infty e^{\phi t}((1-\int_0^\infty g(u)du)1_{(t<1)})+\int_t^\infty g(u)du)dt,
\end{equation}
where
$$
\mu=\int_0^\infty t\tg(t)dt=I'(\phi-).
$$
It is trivial to establish, by changing the order of integration, that
$$
\int_0^\infty e^{\phi t}\int_t^\infty g(u)dudt=\int_0^\infty g(u)\int_0^u
e^{\phi t}dt du=\frac{I(\phi)-\int_0^\infty g(u)du}{\phi}=\frac{1-
\int_0^1 \cq e^{-\cq u}du}{\phi}=\frac{e^{-\cq}}{\phi},
$$
and hence (\ref{p1}) follows.

To establish (\ref{p3}), notice that (by conditioning on the time of the first hit
of 0),
$$
s_i(t)=\int_t^\infty \rho_i(u)du+\int_0^t \rho_i(u)s_{(0,0)}(t-u)du,$$
and so, denoting $e^{\phi t}s_i(t)$ by $f_i(t)$, we obtain
$$
f_i(t)=e^{\phi t}\int_t^\infty \rho_i(u)du +\int_0^t \trho_i(u)f(t-u)du,$$
where $\trho_i(t)\defto e^{\phi t}\rho_i(t)$. Now $f$ is continuous and
converges to $\kappa$ so, by the Dominated Convergence Theorem,
$$
\int_0^t \trho_i(u)f(t-u)du\tto \int_0^\infty \kappa \trho_i(u)du=\kappa
F_{i,0}(\phi).
$$
Moreover, since $F_{i,0}(\phi)=\int_0^\infty \trho_i(u)du<\infty$, it follows that
$e^{\phi t}\int_t^\infty \rho_i(u)du \leq \int_t^\infty \trho_i(u)du\tto
0$, and hence
$$f_i(t)\tto \kappa F_{i,0}(\phi)$$
as required.

To establish (\ref{p2}), observe that
$$s_{(0,u)}(t)=e^{-\cq t}1_{(t<1-u)}+\int_0^t\int_0^{(1-u)\wedge v}\cq e^{-\cq v}\rho(w-
v)s_{(0,0)}(t-w)dvdw$$
and hence
$$f_{(0,u)}(t)\defto e^{\phi t}s_{(0,u)}(t)=e^{(\phi-\cq) t}1_{(t<1-u)}+\int_{w=0}^t\int_{v=0}^{(1-u)\wedge w}\cq e^{(\phi-\cq) v}\trho(w-
v)f(t-w)dvdw,$$
and hence, by the Dominated Convergence Theorem,
\begin{displaymath}
\begin{array}{rcl}
f_{(0,u)}(t)&\tto &\kappa \int_0^\infty \int_0^{w\wedge (1-u)}\cq e^{(\phi-
\cq) v}\trho(w-v)dv\\
&=&\kappa\int_0^\infty\trho(t)dt\int_0^{1-u}\cq
e^{(\phi-\cq) v}dv=\kappa
\frac{\int_0^{1-u}e^{(\phi-\cq)s}ds}{\int_0^{1}e^{(\phi-
\cq)s}ds},\\
\end{array}
\end{displaymath}
as required \hfill$\square$
\begin{remark}
Note that the case mentioned in the introduction, where $Y$ is a
Poisson($r$) process and we let $\tau_Y$ be the first time that an
interjump time is one or larger, can be addressed using the proof of Theorem
\ref{rec1}. In this case, if we consider that the chain \lq\lq returns
directly to 0" at each jump time $Y$ then
$$
I(\lambda)=\int_0^1re^{(\lambda-r)t}dt,
$$
and so $\phi$ satisfies $r\frac{e^{\phi-r}-1}{\phi-r}=1$ which
establishes (\ref{coin4}), and
$$
e^{\phi t}\P(\tau>t)\tto \frac{e^{\phi-r}}{\phi I'(\phi-)}=\frac{\phi -
r}{r(\phi -1)},
$$
for $r\neq 1$.
The case $r=1$ gives $\phi=1$ and $c_1=2$.
\end{remark}
Now we give the

{\em Proof of Lemma \ref{rec2}}
It follows from Theorem 3.3.2
of \cite{JR3} that if $N_0$ is finite then $\alpha^\cstate=\mu^\cstate$.
Now since $\tX$ is $\alpha$-recurrent it follows that
$$
\int_0^\infty e^{\lambda t}\tP_{ii}(t)dt<\infty\hbox{ iff
}\lambda<\alpha^\cstate.
$$
Since $\ts_i(t)\geq \tP_{ii}(t)$, it follows that
$$
\int_0^\infty e^{\lambda t}\ts_{i}(t)dt=\infty\hbox{ if
}\lambda\geq\alpha^\cstate.
$$
Conversely,
since $\alpha^\cstate=\mu^\cstate$
we see that
$$
\int_0^\infty e^{\lambda t}\ts_{i}(t)dt<\infty\hbox{ if
}\lambda<\alpha^\cstate,
$$
and so we conclude that
$$
\int_0^\infty e^{\lambda t}\ts_{i}(t)dt<\infty\hbox{ iff
}\lambda<\alpha^\cstate.
$$
Now
$$I(\lambda)=\int_0^\infty e^{\lambda t}g(t)dt=\int_0^1e^{(\lambda-
\cq)t}dt\bigl(\sum_{i\in N^*_0}q_{0,i}F_{i,0}(\lambda)\bigr)=\int_0^1e^{(\lambda-
\cq)t}dt\bigl(\sum_{i\in N^*_0}q_{0,i}(\frac{F_{i,0}(\lambda)-
1}{\lambda})\bigr).
$$
Now $F_{i,0}(\lambda)<\infty$ if and only if $\lambda<\alpha^\cstate$
and so, since $N^*_0$ is finite by assumption,  $I(\lambda)<\infty$ iff $\lambda<\alpha^\cstate$. It now follows
trivially that $\phi<\alpha^\cstate$ and that (\ref{condn2}) is satisfied
\phantom{p}\hfill$\square$

Now we give the

{\em Proof of Corollary \ref{cor1}} This follows immediately from
Theorem \ref{rec1} and Theorem 4.1.1 of
\cite{JR} provided that we can show that $h$, given by $h:(i,t)\mapsto
e^{\phi t}p_i$, is $\hP$-harmonic.
This is easy to check by considering the chain at the epochs when it
leaves and returns to 0, i.e. we show that, defining $\sigma$ as the
first exit time from 0,
$\E_{(0,u)}h(\hX_{t\wedge\sigma},t\wedge\sigma)=h((0,u),0)$ and
$\E_ih(\hX_{t\wedge\tau_0},t\wedge\tau_0)=h(i,0)$ for $i\in \cstate$.
This is sufficient since $\hX$ is non-explosive
\hfill$\square$

\section{The $\alpha$-transient case}
We seek now to consider the $\alpha$-transient case. In particular, we
shall focus on the case where $\phi=0$. This is not so specific as one
might think since one can (at the cost of a slight extra difficulty)
reduce the general case to that where $\phi=0$.

\subsection{Reducing to the case where $\phi=0$}
We discuss briefly how to transform the problem to this case.

The essential technique is to note that if, for any $\lambda\leq \phi$, we $h$-transform $\hP$ using
the space-time $\hP$-superharmonic function $h^\lambda$ given by 
$$
h^\lambda(i,t)=F_{i,0}(\lambda)e^{\lambda t}\hbox{ for }i\in \cstate
$$
and
$$h^\lambda((0,u),t)=(1-I(\lambda)\frac{J^\lambda(u)}{J^\lambda(1)})e^{-(\lambda-\cq)u}e^{\lambda t}\hbox{ for }u\in [0,1),
$$
where
$$J^\lambda(x)\defto \int_0^xe^{(\lambda-\cq )v}dv,$$
then we obtain a new chain $\overline X$ on $\augstate$, with $\phi_{\overline
X}=\phi-\lambda$ and satisfying
$g_{\overline X}(t)=e^{\lambda t}g(t)$, which dies only from state $(0,1-)$.

\begin{proof}
It is a  standard result that $h^\lambda$ is space-time harmonic for $\hP$
off $\{0\}\times [0,1)$, while, since $I(\lambda)<1$, it is easy to see that $h^\lambda$ is
superharmonic on 
$\{0\}\times [0,1)$, by conditioning on the time of first exit from 0.
Now it is easy to check that $\overline X$ dies only from state $(0,1-)$ and dies
on a visit to 0 with
probability $1-I(\lambda)$ so  the result follows
immediately.
\end{proof}
\begin{remark}
Note that, in the $\alpha$-null-recurrent case, where $I(\phi)=1$ but
$I'(\phi -)=\infty$, the transform above produces a null-recurrent $h$-transform 
when $\lambda=\phi$, whereas the transform is still evanescent
in the $\alpha$-transient case.

It will follow from L'H\^opital's Theorem in the 
$\alpha$-transient cases that if $\psi_i$ denotes the density (on (1,$\infty$)) of $\tau$
when starting from state $i$, then, if
$\frac{\psi_i(t-v)}{\psi_j(t)}$ has a limit as $t\rightarrow \infty$ then
it is the common limit of $\frac{s_i(t-v)}{s_j(t)}=\frac{\int_{t-v}^\infty \psi_i(u)du}
{\int_t^\infty \psi_j(u)du}$ and $\frac{h^\phi_i}{h^\phi_j}\frac{s^{h^\phi}_i(t-v)}{s^{h^\phi}_j(t)}=
\frac{\int_{t-v}^\infty e^{\phi u}\psi_i(u)du}
{\int_t^\infty e^{\phi u}\psi_j(u)du}$.

In the $\alpha$-null recurrent case, we see that this is not of much
help. It is not hard to generalise Lemma 3.3.3 of \cite{JR2} to prove
that in this case $(i,t)\mapsto e^{\phi t}h^\phi_i$ is the unique $\hP$-superharmonic
function of the form $e^{\lambda t}k_i$ and so gives the only possible
weak or vague limit.
\end{remark}

\subsection{Heavy and subexponential tails}
All the results quoted in this subsection, apart from the last, are taken from Sigman \cite{Sigman}.

Recall first that a random variable (normally taking values in $\R^+$)
$Z$, with distribution function $F_Z$,
is said to be {\em heavy-tailed}, or to have a heavy tail, if
$$
\frac{\overline{F_Z}(t+s)}{\overline{F_Z}(t)}\tto 1\hbox{ for all }s\geq 0,
$$
where $\overline{F_Z}\defto 1-F_Z$, is the complementary distribution function.

Denoting the $n$-fold convolution of $F_Z$ by $F^{n}_Z$, $Z$ is said to
have a {\em subexponential tail}, or just to be subexponential, if
\begin{equation}\label{subdef}
\frac{\overline{F^{n}_Z}(t)}{\overline{F_Z}(t)}\tto n \hbox{ for all }n,
\end{equation}
and (\ref{subdef}) holds iff
\begin{equation}\label{subdef2}
\limsup_{t\rightarrow \infty}\frac{\overline{F^{n}_Z}(t)}{\overline{F_Z}(t)}\leq n \hbox{ for
some }n\geq 2.
\end{equation}
A subexponential random variable always has a heavy tail.

Two random variables, $X$ and $Y$, are said to have comparable tails, or to be tail equivalent, if
$$
\overline{F_Y}(t)\sim c\overline{F_X}(t)$$
for some $c>0$.

$Y$ is said to have a lighter tail than $X$ if
$$
\frac{\overline{F_Y}(t)}{\overline{F_X}(t)}\tto 0.
$$
\begin{lemma}\label{subexp14}
If $X$ and $Y$ are independent, $Y$ is lighter tailed than $X$ and $X$ has a subexponential tail then
$X+Y$ has a subexponential tail and
$$
\overline{F_{X+Y}}(t)\sim \overline{F_X}(t).
$$
\end{lemma}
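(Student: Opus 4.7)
My plan is to establish the tail equivalence $\overline{F_{X+Y}}(t) \sim \overline{F_X}(t)$ first, and then deduce subexponentiality of $X+Y$ from the fact (standard; cf.\ Sigman \cite{Sigman}) that the subexponential class is closed under tail equivalence. I assume throughout that $X,Y\geq 0$, consistent with Sigman's setting.

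The lower bound $\liminf \overline{F_{X+Y}}(t)/\overline{F_X}(t) \geq 1$ is immediate from $\{X>t\}\subset\{X+Y>t\}$. For the upper bound I would start from the conditioning identity
\[
\overline{F_{X+Y}}(t) = \int_0^t \overline{F_X}(t-y)\, dF_Y(y) + \overline{F_Y}(t),
\]
split the integral at a fixed large level $A$, and integrate by parts on $[A,t]$. The boundary contributions precisely cancel the stray $\overline{F_Y}(t)$, yielding
\[
\overline{F_{X+Y}}(t) = \int_0^A \overline{F_X}(t-y)\, dF_Y(y) + \overline{F_X}(t-A)\,\overline{F_Y}(A) + \int_0^{t-A} \overline{F_Y}(t-w)\, dF_X(w).
\]
Dividing by $\overline{F_X}(t)$ and letting $t\to\infty$, the first two summands tend to $F_Y(A)$ and $\overline{F_Y}(A)$ respectively by the long-tail property of $F_X$ (a consequence of subexponentiality), so together they contribute $1$. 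For the third summand, fix $\varepsilon>0$ and choose $A$ so that $\overline{F_Y}(s)\leq \varepsilon\overline{F_X}(s)$ for all $s\geq A$; since $t-w\geq A$ throughout the range,
\[
\int_0^{t-A} \frac{\overline{F_Y}(t-w)}{\overline{F_X}(t)}\, dF_X(w) \leq \varepsilon \int_0^{t} \frac{\overline{F_X}(t-w)}{\overline{F_X}(t)}\, dF_X(w),
\]
and the right-hand integral tends to $1$ by the subexponentiality of $X$ (since $\overline{F^{2}_X}(t) = \int_0^t \overline{F_X}(t-w)\, dF_X(w) + \overline{F_X}(t)$ and $\overline{F^{2}_X}(t)/\overline{F_X}(t)\to 2$). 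Passing to $t\to\infty$ and then $\varepsilon\downarrow 0$ gives $\limsup \overline{F_{X+Y}}(t)/\overline{F_X}(t)\leq 1$, completing the tail equivalence.

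The main obstacle is the control of this third integral: pointwise we have no useful bound on $\overline{F_Y}(t-w)/\overline{F_X}(t)$, and the integration by parts step is essential because it converts an integral weighted by $dF_Y$ (for which only the asymptotic tail-lightness of $\overline{F_Y}$ is available) into one weighted by $dF_X$, on which the full strength of the subexponential convolution identity can be deployed. The subexponentiality of $X+Y$ itself then follows at once from the tail equivalence together with the closure of the subexponential class under tail equivalence.
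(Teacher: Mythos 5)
Your argument is correct, but note that the paper itself offers no proof to compare against: Lemma \ref{subexp14} is stated as one of the results quoted from Sigman \cite{Sigman}, so you have supplied the standard literature proof (the $c=0$ case of the classical fact that $\overline{F_X}(t)^{-1}\overline{F_{X+Y}}(t)\to 1+c$ when $\overline{F_Y}(t)/\overline{F_X}(t)\to c$ and $F_X$ is subexponential). The details check out: the lower bound uses $Y\geq 0$, which you rightly flag and which is the relevant setting in this paper (both variables are hitting/holding times); the split identity is valid, though you can avoid the Stieltjes integration-by-parts bookkeeping entirely by partitioning the event $\{X+Y>t\}$ according to $\{Y\leq A\}$, $\{Y>A,\ X\leq t-A\}$ and $\{Y>A,\ X>t-A\}$, which yields the same three terms directly; the first two terms tend to $F_Y(A)+\overline{F_Y}(A)=1$ by the long-tail property (the paper's ``heavy tail''), and your bound on the third term by $\varepsilon\bigl(\overline{F^{2}_X}(t)-\overline{F_X}(t)\bigr)/\overline{F_X}(t)\to\varepsilon$ is exactly right, with no quantifier trouble since the first two limits are independent of $A$. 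The one external ingredient is the closure of the subexponential class under tail equivalence, which you invoke rather than prove; that is a genuinely separate standard fact (also in \cite{Sigman}), so citing it is consistent with the paper's own practice, but if you wanted the proof self-contained you would need either to prove that closure property or to verify criterion (\ref{subdef2}) for $X+Y$ directly, which takes a further argument.
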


\begin{lemma}\label{subexp15}
If $X$ and $Y$ are independent and subexponential and tail-equivalent
with 
$$
\overline{F_Y}(t)\sim c\overline{F_X}(t),$$
then $X+Y$ is subexponential and
$$
\overline{F_{X+Y}}(t)\sim (1+c)\overline{F_X}(t).
$$
\end{lemma}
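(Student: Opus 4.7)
The plan is to compute $\overline{F_{X+Y}}(t)$ directly from the convolution formula
$$\overline{F_{X+Y}}(t) = \int_0^t \overline{F_X}(t-y)\,dF_Y(y) + \overline{F_Y}(t),$$
and read off the asymptotic $\overline{F_{X+Y}}(t) \sim (1+c)\overline{F_X}(t)$; subexponentiality of $X+Y$ will follow for free. The last term is immediately $\sim c\overline{F_X}(t)$ by hypothesis, so everything hinges on showing that the integral is asymptotic to $\overline{F_X}(t)$. Since every subexponential distribution is long-tailed, I can select a function $h(t) \uparrow \infty$ with $h(t) = o(t)$ such that both $\overline{F_X}(t-h(t))/\overline{F_X}(t) \to 1$ and $\overline{F_Y}(t-h(t))/\overline{F_Y}(t) \to 1$ (and the convergence $\overline{F_X}(t-y)/\overline{F_X}(t) \to 1$ is uniform for $y \in [0,h(t)]$).

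I would split the integral at $h(t)$ and $t - h(t)$. The leftmost piece $\int_0^{h(t)}$, by the uniform long-tail estimate, is $\sim \overline{F_X}(t)\,F_Y(h(t)) \to \overline{F_X}(t)$. The rightmost piece $\int_{t-h(t)}^t$ is bounded (using $\overline{F_X}(t-y) \leq 1$ on this range) by $\overline{F_Y}(t-h(t)) - \overline{F_Y}(t) = o(\overline{F_Y}(t)) = o(\overline{F_X}(t))$, combining long-tailedness of $F_Y$ with tail-equivalence. The central piece $\int_{h(t)}^{t-h(t)} \overline{F_X}(t-y)\,dF_Y(y)$ is the main obstacle: its negligibility cannot be read off from long-tail alone and genuinely requires the subexponential hypothesis. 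The standard device is to compare with the self-convolution of $F_X$: when $c=1$ one may take $Y$ to be an independent copy of $X$, so the full integral plus $\overline{F_Y}(t)$ equals $\overline{F_X^{*2}}(t)$, and the defining relation $\overline{F_X^{*2}}(t)/\overline{F_X}(t) \to 2$ combined with the two outer asymptotics forces the middle piece to be $o(\overline{F_X}(t))$. For general $c$, tail-equivalence furnishes a constant $K$ with $\overline{F_Y}(y) \leq K \overline{F_X}(y)$ for $y$ large, so that a dominated comparison with $F_X * F_X$ on the truncated range reduces the middle estimate to the $c=1$ case and yields the same conclusion.

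Adding the three pieces and the boundary term $\overline{F_Y}(t)$ yields $\overline{F_{X+Y}}(t) \sim (1+c)\overline{F_X}(t)$. Subexponentiality of $F_{X+Y}$ then follows from the standard fact that tail-equivalence to a subexponential distribution preserves subexponentiality --- exactly the principle already invoked in Lemma \ref{subexp14}. The decisive step is the control of the middle range: the outer pieces are handled by plain long-tail estimates and the hypothesis, but only the full force of subexponentiality of $F_X$, transferred to $F_Y$ via tail-equivalence, makes the middle contribution vanish asymptotically.
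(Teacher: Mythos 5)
First, a point of reference: the paper itself does not prove this lemma --- it is quoted from Sigman \cite{Sigman} --- so your proposal is being measured against the standard argument rather than an in-paper proof. Your decomposition $\overline{F_{X+Y}}(t)=\int_0^t\overline{F_X}(t-y)\,dF_Y(y)+\overline{F_Y}(t)$, the choice of an insensitivity level $h(t)$, and the two outer estimates are exactly the standard ones and are fine; you also correctly identify the middle range as the only place where subexponentiality must enter.

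The gap is in how you dispose of that middle range. From $\overline{F_Y}(y)\le K\overline{F_X}(y)$ for large $y$ you cannot conclude anything about the integrator $dF_Y$: tail domination does not dominate the measures (for instance $F_Y$ could be purely atomic where $F_X$ is continuous), so the asserted ``dominated comparison'' of $\int_{h(t)}^{t-h(t)}\overline{F_X}(t-y)\,dF_Y(y)$ with the corresponding $F_X$-integral is not a legitimate step as written. The repair is standard but has to be made explicit: transfer the tail bound into the integrand, either by Stieltjes integration by parts, or more cleanly by noting that the middle piece equals $\P\bigl(X+Y>t,\ h(t)<Y\le t-h(t)\bigr)\le\P\bigl(X+Y>t,\ X>h(t),\ Y>h(t)\bigr)$ (on this event $X>t-Y\ge h(t)$), and then conditioning on $X$ rather than on $Y$: since $\P(Y>s)\le K\,\P(X'>s)$ for all $s\ge h(t)$ once $t$ is large (with $X'$ an independent copy of $X$), this probability is at most $K\,\P\bigl(X+X'>t,\ X>h(t),\ X'>h(t)\bigr)$, which is $o(\overline{F_X}(t))$ by exactly the self-convolution computation you sketch, i.e.\ $\overline{F^{2}_X}(t)\sim 2\overline{F_X}(t)$ minus the two outer contributions, each $\sim\overline{F_X}(t)$. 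Relatedly, the sentence ``when $c=1$ one may take $Y$ to be an independent copy of $X$'' conflates tail equivalence with equality in distribution; what the argument actually needs (and all it needs) is the middle-range estimate for the self-convolution of $F_X$, which is what the reduction above invokes. With that step repaired, the rest of your proposal --- summing the pieces to get $(1+c)\overline{F_X}(t)$ and then citing closure of subexponentiality under tail equivalence --- is sound.
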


This generalises to the following random case:
\begin{lemma}\label{subexp16}
Suppose that $X_1,\ldots$ are iid with
common d.f $F$ which is subexponential and $N$ is an independent geometric random variable, then if
$$
S\defto \sum_1^N X_i,
$$
then $S$ is subexponential and
$$
\overline{F_S}(t)\sim (\E N)\overline{F_X}(t).
$$
\end{lemma}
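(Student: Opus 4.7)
The plan is to condition on $N$ and push the tail asymptotic inside the sum via dominated convergence. Writing $p_n\defto\P(N=n)$, for $t>0$ one has
\[
\overline{F_S}(t)=\sum_{n=1}^{\infty}p_n\,\overline{F^n}(t),
\]
so that
\[
\frac{\overline{F_S}(t)}{\overline{F}(t)}=\sum_{n=1}^{\infty}p_n\,\frac{\overline{F^n}(t)}{\overline{F}(t)}.
\]
By the subexponentiality of $F$, each summand on the right converges to $n p_n$ as $t\to\infty$, and the formal pointwise limit of the series is $\sum_{n\ge 1}n p_n=\E N$, precisely the target constant.

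The substantive step is the interchange of limit and sum. Here I would invoke the standard companion to subexponentiality, Kesten's bound (proved in Sigman \cite{Sigman}): for every $\varepsilon>0$ there exists $K_\varepsilon<\infty$ with
\[
\frac{\overline{F^n}(t)}{\overline{F}(t)}\le K_\varepsilon(1+\varepsilon)^n\quad\hbox{for all }n\ge 1,\ t\ge 0.
\]
Since $N$ is geometric it has finite exponential moment $\E(1+\varepsilon)^N$ for every sufficiently small $\varepsilon>0$; fixing any such $\varepsilon$, the sequence $n\mapsto K_\varepsilon p_n(1+\varepsilon)^n$ is summable and dominates the summands uniformly in $t$. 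The Dominated Convergence Theorem then gives $\overline{F_S}(t)\sim(\E N)\overline{F}(t)$, which is the tail asymptotic.

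To deduce that $S$ itself is subexponential I would apply exactly the same scheme to convolutions of $F_S$. For iid copies $S_1,\dots,S_k$ of $S$, the sum $S_1+\dots+S_k$ has the same distribution as $\sum_{i=1}^{M_k}X_i$, where $M_k=N_1+\dots+N_k$ is a sum of $k$ iid geometrics and has a negative binomial law whose tail is again geometric. Repeating the conditioning identity and the same Kesten-plus-DCT bound (with $M_k$ in place of $N$, noting $\E(1+\varepsilon)^{M_k}<\infty$ for $\varepsilon$ small) yields
\[
\overline{F_S^{k}}(t)\sim(\E M_k)\overline{F}(t)=k\E N\cdot\overline{F}(t)\sim k\,\overline{F_S}(t),
\]
so $\overline{F_S^{k}}(t)/\overline{F_S}(t)\to k$ and $S$ satisfies (\ref{subdef}). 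The main obstacle throughout is producing the uniform $n$-dependent envelope needed for DCT; this is where the geometric tail of $N$ enters essentially, beating Kesten's $(1+\varepsilon)^n$ growth.
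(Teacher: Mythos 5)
Your argument is correct, but note that the paper offers no proof of this lemma to compare against: it is imported verbatim from Sigman's primer (``All the results quoted in this subsection, apart from the last, are taken from Sigman \cite{Sigman}''), and what you have written is essentially the standard proof underlying that citation. The two ingredients you use are exactly the right ones: the conditioning identity $\overline{F_S}(t)=\sum_n \P(N=n)\overline{F^{n}}(t)$ together with Kesten's bound $\overline{F^{n}}(t)\le K_\varepsilon(1+\varepsilon)^n\overline{F}(t)$, with $\varepsilon$ chosen small enough that the geometric $N$ satisfies $\E(1+\varepsilon)^N<\infty$, so that dominated convergence yields $\overline{F_S}(t)\sim(\E N)\overline{F}(t)$; and your observation that the $k$-fold convolution of $F_S$ is a compound sum over a negative binomial index $M_k$ with $\E(1+\varepsilon)^{M_k}<\infty$, so the same envelope gives $\overline{F_S^{k}}(t)\sim k\E N\,\overline{F}(t)\sim k\,\overline{F_S}(t)$ and hence subexponentiality of $S$ via (\ref{subdef}). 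The only remark worth making is that the second half can be shortened: once you know $\overline{F_S}(t)\sim(\E N)\overline{F}(t)$, the standard closure of the subexponential class under tail equivalence (also in Sigman's primer) gives subexponentiality of $S$ at once; your negative-binomial route buys self-containedness (only Kesten's bound is needed), at the cost of repeating the domination argument.
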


Finally, we have the following
\begin{lemma}\label{mix}
Suppose that $X_1,\ldots$ are independent and tail-equivalent with
$$F_{X_i}\defto F_i,
$$
and $J$ is an independent random variable taking values in $\N$.
Let
$$Y=X_J,$$
(so that $Y$ is a mixture of the $X_i$s)
and denote its distribution function by $F$ (so $F(t)=\sum_{i\in
\N}\P(J=i)F_i(t)$).

Now suppose that
$$\overline{F_{i}}(t)\sim a_i\overline{F_{1}}(t):
$$
if the collection 
$\{\frac{\overline{F_J}(t)}{\overline{F_{1}}(t)};\; t\geq 0\}$ is uniformly
integrable then, 
\begin{equation}\label{subexp9}
\overline F(t)\sim (\E a_J)\overline{F_1}(t).
\end{equation}
In particular, if $J$ is a bounded r.v. then (\ref{subexp9}) holds.
\end{lemma}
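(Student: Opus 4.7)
The plan is to condition on $J$ and reduce the statement to a Vitali convergence argument. Since $(X_i)_{i\in\N}$ is independent of $J$,
\[
\overline F(t) \;=\; \sum_{i\in\N}\P(J=i)\,\overline{F_i}(t) \;=\; \E\,\overline{F_J}(t),
\]
so, dividing through,
\[
\frac{\overline F(t)}{\overline{F_1}(t)} \;=\; \E\!\left[\frac{\overline{F_J}(t)}{\overline{F_1}(t)}\right].
\]
Hence (\ref{subexp9}) is equivalent to the interchange of limit and expectation on the right-hand side.

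In the general case I would invoke Vitali's theorem. The pointwise convergence input is automatic: conditionally on $\{J=i\}$ the integrand is $\frac{\overline{F_i}(t)}{\overline{F_1}(t)}\to a_i$ by the tail-equivalence hypothesis, so the family converges almost surely (under the law of $J$) to $a_J$ as $t\to\infty$. The uniform integrability of $\{\frac{\overline{F_J}(t)}{\overline{F_1}(t)}:t\ge 0\}$ is precisely the remaining assumption, and together these yield
\[
\E\!\left[\frac{\overline{F_J}(t)}{\overline{F_1}(t)}\right] \tto \E a_J,
\]
as required.

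For the bounded case I would show that uniform integrability comes for free. If $J\le N$ a.s., then for each $i\le N$ the ratio $\frac{\overline{F_i}(t)}{\overline{F_1}(t)}$ is continuous in $t$ (on the set where $\overline{F_1}(t)>0$) and converges to the finite limit $a_i$; it is therefore bounded on $[t_0,\infty)$ for some $t_0$. Taking the max over the finitely many $i\le N$ yields a deterministic bound on $\frac{\overline{F_J}(t)}{\overline{F_1}(t)}$ for $t$ large, so UI is trivial, and equivalently the expectation is a finite sum with convergence handled termwise. There is no genuine obstacle: Vitali's theorem is designed for precisely this situation, and the only real content is setting up the conditioning identity above; the mild subtlety of ensuring $\overline{F_1}(t)>0$ for large $t$ (which follows from $a_1=1$ once $\overline{F_J}(t)$ is nontrivial) does not affect the asymptotic statement.
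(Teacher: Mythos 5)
Your proposal is correct and follows essentially the same route as the paper: the identity $\overline F(t)=\E\,\overline{F_J}(t)$, almost sure convergence of $\overline{F_J}(t)/\overline{F_1}(t)$ to $a_J$, and uniform integrability upgrading this to $L^1$ convergence (Vitali), with the bounded-$J$ case handled by bounding the finitely many ratios. The only difference is cosmetic: you name Vitali's theorem and note the $\overline{F_1}(t)>0$ point explicitly, which the paper leaves implicit.
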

\begin{proof}
It follows from the assumptions that
$$
\frac{\overline{F_J}(t)}{\overline{F_{1}}(t)}\tto a_J\hbox{ a.s.}
$$
Thus if the collection is u.i. then convergence is also in $L^1$ and so,
since $\E \overline{F_J}(t)=\overline{F}(t)$, we see that
$$
\frac{\overline{F}(t)}{\overline{F_{1}}(t)}\tto \E a_J.
$$
In particular, if $J\leq n$ a.s. then
$$
\limsup_{t\rightarrow \infty}\frac{\overline{F_J}(t)}{\overline{F_{1}}(t)}\leq
\max_{1\leq i\leq n} a_i \hbox{ a.s.},
$$
and so the collection is indeed u.i.
\end{proof}

\subsection{Results for heavy tails}
Suppose first that $0=\phi<\mu^{\cstate}$.
\begin{theorem}\label{subexp1}
If $0=\phi<\mu^{\cstate}$ and $\tau$ is subexponential, then $\frac{s_i(t-
v)}{s_j(t)}\tto 1$ for all $v\geq 0$ and $s_{(0,u)}(t-v)/s_{(0,0)}(t)\tto \frac{1-e^{-
\cq(1-u)}}{1-e^{-\cq}}.$
\end{theorem}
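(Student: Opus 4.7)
The plan is to prove the two displayed limits separately. Both arguments combine Lemma~\ref{subexp14} (tail insensitivity under lighter-tailed perturbations) with the fact, recalled just before that lemma, that every subexponential distribution is heavy-tailed in the sense $\overline F(t+s)/\overline F(t)\to 1$; the second limit additionally uses a Markovian decomposition at the first exit from $0$.

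For $s_i(t-v)/s_j(t)\to 1$ with $i,j\in\cstate$, the strong Markov property at the first hitting time $\tai_0$ of $0$ yields, under $\P_i$, the in-distribution decomposition $\tau=\tai_0+\tau'$, where $\tau'$ is independent of $\tai_0$ and has the law of $\tau$ under $\P_{(0,0)}$. The hypothesis $0=\phi<\mu^{\cstate}$ ensures that $\tai_0$ has an exponential moment and is therefore lighter-tailed than the subexponential $\tau'$, so Lemma~\ref{subexp14} gives $s_i(t)\sim s_{(0,0)}(t)$ for every $i\in\cstate$. Since $\tau$ is heavy-tailed, $s_{(0,0)}(t-v)/s_{(0,0)}(t)\to 1$, and the three-factor identity
$$
\frac{s_i(t-v)}{s_j(t)} = \frac{s_i(t-v)}{s_{(0,0)}(t-v)}\cdot\frac{s_{(0,0)}(t-v)}{s_{(0,0)}(t)}\cdot\frac{s_{(0,0)}(t)}{s_j(t)}
$$
tends to $1$.

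For the limit involving $(0,u)$, we condition on the residual exponential holding time $V\sim\mathrm{Exp}(\cq)$ at state $0$ and on the target state $J$ reached on first leaving. For $t>1-u$ this yields
$$
s_{(0,u)}(t) = \int_0^{1-u} e^{-\cq w}\sum_{j\in N^*_0} q_{0,j}\,s_j(t-w)\,dw,
$$
and the same formula holds for $s_{(0,0)}(t)$ with upper limit $1$ in place of $1-u$. Setting $\Psi(t)=\sum_{j\in N^*_0} q_{0,j}\,s_j(t)$, each $s_j$ is heavy-tailed by the first part, so $\Psi(t-w)/\Psi(t)\to 1$ uniformly in $w\in[0,1]$ (subject to the proviso below); dividing the two integral representations gives
$$
\frac{s_{(0,u)}(t)}{s_{(0,0)}(t)} \longrightarrow \frac{\int_0^{1-u} e^{-\cq w}\,dw}{\int_0^{1} e^{-\cq w}\,dw} = \frac{1-e^{-\cq(1-u)}}{1-e^{-\cq}},
$$
and the $v$-shift of the statement is then absorbed by the heavy-tail property of $s_{(0,0)}$ already established in the first part.

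The principal technical point is the uniformity in $w\in[0,1]$ of the convergence $\Psi(t-w)/\Psi(t)\to 1$ beneath the integral sign, which is delicate when $N^*_0$ is infinite. If $N^*_0$ is finite---as in the birth-and-death setting of Corollary~\ref{bd2}---$\Psi$ is a finite sum of heavy-tailed functions and dominated convergence settles the matter at once; in greater generality additional uniform control across $j$ would be needed, obtainable for instance from the monotonicity of each $s_j$ in $t$ or from the tail-equivalence of the $s_j$ already established in the first part.
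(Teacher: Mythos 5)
Your first limit is proved exactly as in the paper: decompose $\tau$ under $\P_i$ as $\tai_0+\tau'$, note $\mu^\cstate>0$ makes $\tai_0$ exponentially light while the subexponential $\tau'$ is heavy, apply Lemma \ref{subexp14}, and absorb the shift $v$ by heavy-tailedness. That part is fine.

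The second limit is where there is a genuine gap. Your integral representation of $s_{(0,u)}$ is correct and is the same as the paper's (the paper writes $\Psi(t)=\cq\,\P(\ret^1+\tau>t)$ instead of the explicit sum over $j$), but the step you flag as the ``principal technical point'' --- shift-insensitivity of $\Psi$ --- is precisely what needs proving, and neither of your proposed escapes works. First, the case $N^*_0$ finite is vacuous under the hypotheses: with $\mu^\cstate>0$ and $N^*_0$ finite, $\ret^1$ and hence $\tau$ have exponential moments (equivalently $\phi>0$), contradicting $\phi=0$ and subexponentiality of $\tau$. Second, the route via tail-equivalence of the $s_j$ plus an interchange of limit and sum is provably unavailable: the exact renewal identity $s_{(0,0)}(t)=\int_0^1\cq e^{-\cq w}\P(\ret^1+\tau>t-w)\,dw$, together with monotonicity and heavy-tailedness of $s_{(0,0)}$, sandwiches $\P(\ret^1+\tau>t)$ between $s_{(0,0)}(t+1)/(1-e^{-\cq})$ and $s_{(0,0)}(t)/(1-e^{-\cq})$, so $\Psi(t)\sim \cq\,s_{(0,0)}(t)/(1-e^{-\cq})$; whereas termwise use of $s_j(t)\sim s_{(0,0)}(t)$ would give $\Psi(t)\sim\cq\,s_{(0,0)}(t)$. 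Since $1-e^{-\cq}<1$ these disagree, i.e.\ the uniform integrability across $j$ that you hope for must fail whenever the theorem's hypotheses hold (the heavy tail of $\ret^1$ is produced ``at infinity'' in $j$, as in the paper's random-walk example with $q_{0,i}\propto i^{-2}$). The correct completion is the sandwich itself: it shows directly that $\P(\ret^1+\tau>t-c)/\P(\ret^1+\tau>t)\le s_{(0,0)}(t-c)/s_{(0,0)}(t+1)\tto 1$, so $\Psi$ is shift-insensitive uniformly over the bounded range $w\in[0,1]$ (by monotonicity), and then your division of the two integral representations, plus the shift by $v$, gives the stated limit $\bigl(1-e^{-\cq(1-u)}\bigr)/\bigl(1-e^{-\cq}\bigr)$. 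Note the paper's own one-line ``similarly'' at this point also glosses over this; but the needed fact is the heavy-tailedness of $\ret^1+\tau$ obtained as above, not tail-equivalence of $\ret^1+\tau$ with $\tau$, which is false here.
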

\begin{proof}
Notice first that, since $\mu^{\cstate}>0$, $\P_i(\tau_0>t)\leq k_i e^{-\mu^{\cstate}
t/2}$, so that $\tau^{(i)}_0$ has a lighter tail than $\tau$ so, by Lemma \ref{subexp14},
$$s_i(t-v)=\P_{(0,0)}(\tau^{(i)}_0+\tau>t-v)\sim
\P_{(0,0)}(\tau>t-v)\sim \P_{(0,0)}(\tau>t)=s_{(0,0)}(t).$$
Similarly,
$$s_{(0,u)}(t-v)=\int_0^{1-u}\cq e^{-\cq w}\P(\ret^1+\tau>t-v-w)dw\sim  (1-e^{-
\cq (1-u)})\P(\ret^1+\tau>t)$$
and so $s_{(0,u)}(t-v)/s_{(0,0)}(t)$ converges
to the desired limit.
\end{proof}
It is easy to see that $h$, defined by $h_i=1$, for $i\in C$ and
$h_{(0,u)}=\frac{1-e^{-\cq(1-u)}}{1-e^{-\cq}}$ is strictly $\hP$-
superharmonic and is harmonic on $\cstate$:
the following theorem then follows easily from a mild
adaptation of Theorem 4.1.1 of \cite{JR}.
\begin{theorem}\label{subexp2}
Under the conditions of Theorem \ref{subexp1}, the restriction of the law of $\tX^T$ to
$\F_s$ converges {\em vaguely} to that of $\Xinf$ restricted to $\F_s$, where
$\Pinf$ is the (substochastic) $h$-transform of $\tP$ (which dies from state $(0,u)$ with hazard rate
$\lambda(u)=\frac{\cq e^{-\cq}}{1-e^{-\cq(1-u)}}$).
\end{theorem}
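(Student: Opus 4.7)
The plan is to convert the tail-ratio limits supplied by Theorem \ref{subexp1} into a Radon--Nikodym convergence of $\F_s$-laws, exactly as in Theorem~4.1.1 of \cite{JR}, with a minor adaptation to handle the fact that $h$ is only strictly superharmonic on the slab $\{0\}\times[0,1)$.

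First I would confirm that the candidate $h$, given by $h_i=1$ on $\cstate$ and $h_{(0,u)}=(1-e^{-\cq(1-u)})/(1-e^{-\cq})$, is $\hP$-superharmonic and harmonic on $\cstate$. On $\cstate$ this is immediate, because $\hX$ agrees with $\tX$ up to the first hit of $0$ and the killed semigroup preserves constants there. On $\{0\}\times[0,1)$ I would apply the generator $\hat{\mathcal A}f(0,u)=\partial_u f(0,u)+\sum_{j\in\cstate}q_{0,j}(f(j)-f(0,u))$ to $h$, obtaining
$$
\hat{\mathcal A}h(0,u)=-\frac{\cq e^{-\cq}}{1-e^{-\cq}}<0,
$$
so $h$ is strictly superharmonic; dividing by $h(0,u)$ recovers the claimed hazard rate $\lambda(u)=\cq e^{-\cq}/(1-e^{-\cq(1-u)})$.

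Next I would rewrite the two conclusions of Theorem \ref{subexp1} as the single ratio statement $s_j(T-s)/s_i(T)\tto h_j/h_i$ for all $i,j\in\augstate$ and $s\ge 0$, and combine it with the Markov-property identity
$$
\E_i[\Phi\,\mathbf{1}_{(\tau>T)}]=\E_i\bigl[\Phi\,\mathbf{1}_{(\tau>s)}\,s_{\hX_s}(T-s)\bigr]
$$
for bounded $\F_s$-measurable $\Phi$. Dividing by $s_i(T)$ and sending $T\rightarrow\infty$ would yield
$$
\lim_{T\rightarrow\infty}\E_i[\Phi\mid\tau>T]=\E_i\Bigl[\Phi\,\mathbf{1}_{(\tau>s)}\,\frac{h_{\hX_s}}{h_i}\Bigr],
$$
which is precisely integration against the $h$-transform $\Pinf$ of $\hP$ restricted to $\F_s$. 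Since $h\le 1$ with strict inequality on $\{0\}\times[0,1)$, the limiting $\F_s$-law has total mass $\E_i[h(\hX_s,H_s)]/h_i<1$, so the convergence is vague rather than weak, with the mass defect accounted for by the cumulative hazard associated with $\lambda$.

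The main obstacle, and the place where the "mild adaptation" of Theorem~4.1.1 of \cite{JR} is needed, is justifying the interchange of limit and expectation above: this requires some uniform integrability of the family $\{s_{\hX_s}(T-s)/s_i(T)\}_T$ on $\{\tau>s\}$, and the delicate region is $H_s\to 1^-$, where $h(0,u)\to 0$. I would deal with this by a preliminary truncation: establish the convergence against functionals supported on $\{H_s\le 1-\delta\}$, where Theorem \ref{subexp1} gives ratios bounded uniformly in $T$ and dominated convergence applies directly, and then let $\delta\downarrow 0$, controlling the boundary contribution by Fatou together with the observation that $\hP_i(\hX_s\in\{0\}\times[1-\delta,1))\to 0$ as $\delta\downarrow 0$ uniformly in the starting state.
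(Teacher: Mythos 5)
Your verification of superharmonicity and the hazard-rate computation are correct and are exactly the part the paper leaves to the reader ("it is easy to see that $h$ is strictly $\hP$-superharmonic and harmonic on $\cstate$"); the paper then simply invokes a mild adaptation of Theorem 4.1.1 of \cite{JR}, so the substance you had to supply was precisely the convergence step, and that is where your argument breaks. Your claimed limit $\lim_{T}\E_i[\Phi\mid\tau>T]=\E_i\bigl[\Phi\,1_{(\tau>s)}h(\hX_s,H_s)/h_i\bigr]$ for \emph{all} bounded $\F_s$-measurable $\Phi$ cannot hold: each conditioned law has total mass $1$, while (as you yourself note) the right-hand side at $\Phi\equiv 1$ equals $\E_i[h(\hX_s,H_s)1_{(\tau>s)}]/h_i<1$. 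Equivalently, with $Y_T:=1_{(\tau>s)}\,s_{(\hX_s,H_s)}(T-s)/s_i(T)$ the Markov property gives $\E_i Y_T=1$ for every $T>s$, whereas $Y_T\to Y:=1_{(\tau>s)}h(\hX_s,H_s)/h_i$ a.s.\ with $\E_i Y<1$; so the family $\{Y_T\}$ is genuinely \emph{not} uniformly integrable, and no truncation argument can restore the interchange for all bounded $\Phi$. Calling the resulting convergence "vague rather than weak" after asserting it for every bounded test functional is self-contradictory: that assertion would be setwise convergence of probability measures to a defective limit, which is impossible. The failure of uniform integrability is not a technical nuisance to be argued away; it is the reason the theorem claims only vague convergence, and the proof must be organised around it.

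Your diagnosis of where the difficulty sits, and hence your proposed fix, is also aimed at the wrong region. No mass is lost through $H_s\to 1^-$: since $s_{(0,u)}(t)$ is nonincreasing in $u$, $\limsup_T\P_i(H_s\ge 1-\delta\mid\tau>T)\le \frac{1-e^{-\cq\delta}}{1-e^{-\cq}}\to 0$ as $\delta\downarrow 0$, so the conditioning already suppresses that boundary (this is what $h(0,u)\to 0$ is telling you). Under the hypotheses of Theorem \ref{subexp1} ($\mu^\cstate>0$, $\tau$ subexponential, hence $\cstate$ infinite), the defect escapes at spatial infinity in $\cstate$: conditionally on $(\tau>T)$ there is, with asymptotically positive probability, already by time $s$ a jump from $0$ to a state whose return time is comparable with $T$, and those states drift to infinity as $T\to\infty$; this is exactly the mass $1-\E_i[h(\hX_s,H_s)1_{(\tau>s)}]/h_i$ that the limit fails to see. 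Note also that the per-state ratio limits of Theorem \ref{subexp1} are not uniform over $j\in\cstate$, so even your dominated-convergence step on $\{H_s\le 1-\delta\}$ is unjustified without a spatial truncation. The adaptation of Theorem 4.1.1 of \cite{JR} that is actually needed runs the other way round: prove the limit only for functionals vanishing off paths that stay in a fixed finite subset of $\cstate$ up to time $s$ (there the fixed-state ratio limits plus dominated convergence over finitely many states suffice), identify the limit with the $h$-transform of $\hP$, and let the residual mass escape to infinity --- that restricted class of test functionals is what "vague" means here.
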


\begin{example}
Consider the case where $\sum_{j\in \cstate}q_{0,j}F_{j,0}(\lambda)=\infty$ for all
$\lambda>0$ but $\mu^{\cstate}>0$. For example, we may take the nearest-neighbour random
walk on $\N$ with up-jump rate $b$ and down-jump rate $d$ (with $b<d$) and then set
$$q_0=1;\; q_{0,i}=\frac{6}{\pi^2i^2}\hbox{ for }i\in \N.
$$
It is well-known that
$$\mu^\cstate=b+d-2\sqrt{bd},$$
and
$$
F_{i,0}(\lambda)=\gamma_\lambda^i,
$$
where
$$\gamma_\lambda=\frac{b+d-\lambda-\sqrt{(b+d-\lambda)^2-4bd}}{2b}>1
$$
for $0<\lambda\leq\mu^\cstate$.
So, for any $\lambda>0$, $\sum_{i\in \N}q_{0,i}F_{i,0}(\lambda)=\E e^{\lambda \ret^1}=\infty$
and hence $\phi=0$.
\end{example}

Now we consider the case where $\mu^{\cstate}=0$ (and hence $\phi=0$ also).

\begin{theorem}\label{subexp3}
Denote by $\tau^{(i)}$ a generic random variable having the distribution
of the $\tau$ conditional on $X_0=i$.
Suppose that $\tau^{(i)}$ have comparable heavy tails, so that
$\P(\tai>t)=\P_i(\tau>t)\sim c_i\P(\taze>t)=c_i\P_{(0,0)}(\tau>t)$ and $\frac{\P_i(\tau>t+s)}{\P_i(\tau>t)}\tto
1$,
then,
defining
$$h_i=c_i\hbox{,  for }i\in \state$$
and
$$
h_{(0,u)}=\frac{1-e^{-\cq(1-u)}}{1-e^{-\cq}},
$$

\begin{equation}\label{subexp22}
\frac{s_j(t-
v)}{s_i(t)}\tto \frac{c_j}{c_i}\hbox{ for all }v\geq 0\hbox{ and for
all }i,j\in\augstate.
\end{equation}
In particular, if the $\tai_0$'s have comparable subexponential tails, with
$$
\P(\tai_0 >t)=\P_i(\tau_0>t)\sim a_i\P(\taone_0>t)=a_i\P_1(\tau_0>t)
$$
and
$$
q_{0,i}=0\hbox{ for }i> n,
$$
then,
defining $a_0=0$,
$m=\sum_{i\in\cstate}q_{0,i}a_i/\cq$,
$$h_i=1+\frac{a_i}{(e^{\cq}-1)m}\hbox{,  for }i\in \state$$
and
$$
h_{(0,u)}=\frac{1-e^{-\cq(1-u)}}{1-e^{-\cq}},
$$
we have that
$$\frac{s_j(t-v)}{s_i(t)}\tto \frac{h_j}{h_i}
\hbox{ for all }v\geq 0\hbox{ and for all }i,j\in\augstate.
$$

In general $a$ must be $\tP$-superharmonic. If $a$ is $\tP$-harmonic then $h$ is $\hP$-harmonic, so that,
in this case, the restriction of the law of $\tX^T$ to
$\F_s$ converges weakly to that of $\Xinf$ restricted to $\F_s$, where
$\Pinf$ is the (stochastic) $h$-transform of $\tP$.
\end{theorem}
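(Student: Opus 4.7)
The plan is to split the proof into three stages: first, derive the ratio limit \eqref{subexp22} from the abstract tail-equivalence assumption; second, identify the constants with $h$ by applying the subexponential calculus (Lemmas \ref{subexp14}--\ref{mix}) to a strong-Markov decomposition of $\tau$; third, verify $\tP$-superharmonicity of $a$ and reduce the weak-convergence claim to Theorem~4.1.1 of \cite{JR}.

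For the first stage, when $i,j\in S$, \eqref{subexp22} is immediate from
\[
\frac{s_j(t-v)}{s_i(t)} = \frac{s_j(t-v)}{s_{(0,0)}(t-v)}\cdot\frac{s_{(0,0)}(t-v)}{s_{(0,0)}(t)}\cdot\frac{s_{(0,0)}(t)}{s_i(t)} \longrightarrow c_j\cdot 1\cdot \frac{1}{c_i},
\]
using tail-equivalence for the outer ratios and the heavy-tail property for the middle one. For $i=(0,u)$ I condition on the first exit time from $0$ to obtain, for $t>1-u$,
\[
s_{(0,u)}(t) = \int_0^{1-u} e^{-\cq w}\sum_{j\in\cstate} q_{0,j}\,s_j(t-w)\,dw,
\]
and pass to the limit uniformly in $w$ on the bounded interval $[0,1-u]$ to get $s_{(0,u)}(t)\sim \frac{1-e^{-\cq(1-u)}}{\cq}\sum_j q_{0,j}c_j\cdot s_{(0,0)}(t)$. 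Specialising to $u=0$ forces the consistency $\sum_j q_{0,j}c_j = \cq/(1-e^{-\cq})$, which then gives $s_{(0,u)}(t)/s_{(0,0)}(t)\to h_{(0,u)}$.

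For the second stage, starting from $i\in\cstate$, the strong Markov property at the successive visits to $0$ yields
\[
\tau \overset{d}{=} \tai_0 + \sum_{k=1}^N (\hold^k+\ret^k) + 1,
\]
where the $\hold^k$ are i.i.d.\ Exp$(\cq)$ variates conditioned to be $<1$ (hence bounded by $1$), the $\ret^k$ are i.i.d.\ mixtures of the $\{\tau^{(j)}_0:j\in N^*_0\}$ with weights $q_{0,j}/\cq$ over the finite set $N^*_0$, and $N$ is an independent geometric r.v.\ with $\E N=e^{\cq}-1$. Lemma~\ref{mix} gives $\P(\ret^k>t)\sim m\,\P(\taone_0>t)$; Lemma~\ref{subexp14} absorbs the bounded $\hold^k$; Lemma~\ref{subexp16} yields $\sum_1^N(\hold^k+\ret^k)$ subexponential with tail $\sim (e^{\cq}-1)m\,\P(\taone_0>t)$; finally Lemma~\ref{subexp15} (together with one more invocation of Lemma~\ref{subexp14} to absorb the constant $+1$) combines this with the independent subexponential $\tai_0$ to give $s_i(t)\sim(a_i+(e^{\cq}-1)m)\,\P(\taone_0>t)$. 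Specialising to $i=0$ reads off $s_{(0,0)}(t)\sim(e^{\cq}-1)m\,\P(\taone_0>t)$, and dividing recovers $c_i = 1+a_i/((e^{\cq}-1)m) = h_i$; the ratio limit for general states then follows from the first stage.

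For the third stage, $\tP$-superharmonicity of $a$ follows from
\[
\P_i(\tau_0>t+s) = \E_i\bigl[\P_{X_t}(\tau_0>s)\,1_{(\tau_0>t)}\bigr]
\]
by dividing through by $\P_1(\tau_0>s)$, letting $s\to\infty$, and applying Fatou on the right: the left converges to $a_i$ (heavy-tailedness of $\taone_0$ and tail equivalence), while the right is bounded below in the limit by $[\tP(t)a](i)$. When $a$ is $\tP$-harmonic, equality holds, and an excursion-endpoint verification modelled on the proof of Corollary~\ref{cor1} promotes this to $\hat P$-harmonicity of $h$ on $\augstate$, so weak convergence follows from Theorem~4.1.1 of \cite{JR}. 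The main obstacles I anticipate are (i) upgrading the pointwise heavy-tail property $\P_i(\tau>t+v)/\P_i(\tau>t)\to 1$ to uniform convergence on bounded $v$-intervals, needed to exchange limit and integral in stage~(a), and (ii) the Fatou step of stage~(c), where one must either find an integrable envelope for $\P_{X_t}(\tau_0>s)/\P_1(\tau_0>s)$ or exploit a monotonicity in $s$.
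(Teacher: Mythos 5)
Your proposal follows the paper's proof essentially verbatim on the substantive claims: the same decomposition $\tau^{(i)}=\tau^{(i)}_0+1+\sum_{k=1}^{N}(\tilde H^k+R^k)$ with $N$ geometric, the same appeals to Lemmas \ref{mix}, \ref{subexp14}, \ref{subexp15} and \ref{subexp16} to identify the constants $h_i=1+a_i/((e^{\cq}-1)m)$, and the same reduction of the weak-convergence statement to $\hat P$-harmonicity of $h$ (given $\tilde P$-harmonicity of $a$) plus Theorem 4.1.1 of \cite{JR}, while your extra details (the $u=0$ consistency identity for the $(0,u)$ states and the Markov-property-plus-Fatou argument for $\tilde P$-superharmonicity of $a$) correctly fill in steps the paper leaves implicit. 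The one caveat is in the first claim when infinitely many $q_{0,j}>0$: passing to the limit termwise in $\sum_j q_{0,j}s_j(t-w)$ is not justified by uniformity in $w$ alone (Fatou gives only an inequality in your consistency relation), and one should instead either assume uniform integrability as in Lemma \ref{mix} or sandwich $s_{(0,u)}$ between translates of $\sum_j q_{0,j}s_j$ using monotonicity and the long tail of $s_{(0,0)}$ --- a point that does not affect the second claim, where $q_{0,j}=0$ for $j>n$ makes the sum finite.
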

\begin{proof}
The first claim is essentially a restatement of the conditions for
convergence in (\ref{subexp22}).

To prove the second statement, first notice that we may write
$$
\tai=\tai_0+1+\sum_{n=1}^N (\thold^n+R^n),
$$
where $(\thold^n)_{n\geq 1}$ are a sequence of iid random variables with
distribution that of the holding time in 0 conditioned on its lying in
(0,1), $N$ is a Geometric($e^{-\cq}$) r.v. and the $R^n$'s are as in
section 2 and all are independent.

Now each $R^n$ is a mixture of $\tai_0$s, so, by Lemma \ref{mix},
$$
\P(R^n\geq t)=\sum_{i\in \cstate} \frac{q_{0,i}}{\cq}\P(\tai_0\geq t)\sim
\sum_{i\in \cstate} \frac{q_{0,i}}{\cq}a_i\P(\taone_0\geq
t)=m\P(\taone_0\geq t).
$$

Now it follows from Lemma \ref{subexp14} that $(\thold^n+R^n)$
is tail equivalent to $R^n$ and is subexponential and then we deduce,
from Lemmas \ref{subexp15} and \ref{subexp16} that $\P(\tai>t)\sim
(a_i+m(e^{\cq}-1))\P(\taone_0\geq t)=m(e^{\cq}-1)h_i\P(\taone_0\geq t).$
The last statement follows from the fact that $\tX$ is non-explosive and
it is then easy to check (by considering the chain at the epochs when it leaves and returns to 0) that
then $h$ is $\hP$-harmonic if $a$ is $\tP$-harmonic
\end{proof}

\begin{theorem}\label{subexp4}
Suppose that $\tX$ is a recurrent birth and death process on $\Z^+$ and, for some
$i$, $\tai_0$ is subexponential, then $\P(\tau^{(j)}_0>t)\sim
\frac{\beta_j}{\beta_i}\P(\tai_0>t)$, where $\beta$ is the unique $\tP$
harmonic function on $\N$ with $\beta_1=1$.
\end{theorem}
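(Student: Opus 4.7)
The plan is to exploit the skip-free structure of the birth-death chain, writing $\lambda_k,\mu_k$ for the birth and death rates at state $k\ge 1$. Decompose $\tau_0^{(n)}=T_n+T_{n-1}+\cdots+T_1$, where $T_k$ is the first-passage time from $k$ to $k-1$; by the strong Markov property applied at successive downcrossings, the $T_k$ are independent. A one-step analysis at $k$ (condition on the direction of the first jump, then use strong Markov at each subsequent return to $k$) gives the distributional recursion
\begin{equation*}
T_k \stackrel{d}{=} \sum_{j=0}^{M_k} E_k^{(j)} + \sum_{j=1}^{M_k} T_{k+1}^{(j)},
\end{equation*}
with $M_k$ geometric on $\{0,1,\ldots\}$ of mean $\lambda_k/\mu_k$, $E_k^{(j)}$ iid exponentials of rate $\lambda_k+\mu_k$, and $T_{k+1}^{(j)}$ iid copies of $T_{k+1}$. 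This identity drives everything that follows.

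First I would propagate subexponentiality from $\tau_0^{(i)}$ to every $T_k$. Since $\tau_0^{(i)}=T_i+\cdots+T_1$ is a subexponential independent sum, the standard characterisation forces at least one summand $T_{k_0}$ ($1\le k_0\le i$) to be subexponential and tail-equivalent to $\tau_0^{(i)}$. Downward propagation is clean: starting from $T_{k_0}$, Lemma \ref{subexp16} makes $\sum_{1}^{M_{k_0-1}}T_{k_0}^{(j)}$ subexponential, and Lemma \ref{subexp14} absorbs the exponentially light sum $\sum_{0}^{M_{k_0-1}}E_{k_0-1}^{(j)}$, so $T_{k_0-1}$ is subexponential; iterate down to $T_1$. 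Upward propagation (needed for $j>i$) is the main technical obstacle: one must show that if $T_k$ is subexponential then so is $T_{k+1}$, which amounts to a converse to Lemma \ref{subexp16} for compound geometric sums. This is standard in the subexponential literature but requires some care.

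The tail asymptotics are then forced by the same recursion: applying Lemma \ref{subexp16}, using $\E M_k=\lambda_k/\mu_k$, and invoking Lemma \ref{subexp14} to discard the exponential part gives
\begin{equation*}
\bar F_k(t)\defto\P(T_k>t)\sim(\lambda_k/\mu_k)\bar F_{k+1}(t),
\end{equation*}
hence $\bar F_{k+1}/\bar F_k\to\mu_k/\lambda_k$. Setting $\pi_k\defto\prod_{l=1}^k\mu_l/\lambda_l$ with $\pi_0=1$ and iterating yields $\bar F_k(t)\sim\pi_{k-1}\bar F_1(t)$.

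Finally, $T_1,\ldots,T_j$ are independent subexponential random variables whose tails are pairwise comparable with ratios $\pi_0,\pi_1,\ldots,\pi_{j-1}$ relative to $\bar F_1$, so the finite-sum extension of Lemma \ref{subexp15} (the single-big-jump principle) delivers
\begin{equation*}
\P(\tau_0^{(j)}>t)\sim\sum_{k=1}^{j}\pi_{k-1}\bar F_1(t)=\beta_j\bar F_1(t),
\end{equation*}
since $\beta_j=\sum_{k=0}^{j-1}\pi_k$ is the closed form of the unique solution to $\lambda_n(\beta_{n+1}-\beta_n)=\mu_n(\beta_n-\beta_{n-1})$ with $\beta_0=0,\beta_1=1$, which is precisely the stated $\tP$-harmonic function. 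Dividing by the same asymptotic for $\tau_0^{(i)}$ yields the claimed ratio $\beta_j/\beta_i$.
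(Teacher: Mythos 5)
Your passage-time decomposition $\tau^{(j)}_0=T_j+\cdots+T_1$ and the compound-geometric recursion for $T_k$ are correct, and your closed form $\beta_j=\sum_{k=0}^{j-1}\prod_{l=1}^{k}\mu_l/\lambda_l$ is indeed the unique $\tP$-harmonic function, so the target identity is set up correctly. The argument fails, however, at the two ``converse'' steps, and these are not minor technicalities. First, your claim that since $\tau^{(i)}_0=T_i+\cdots+T_1$ is subexponential, ``the standard characterisation forces'' some summand $T_{k_0}$ to be subexponential and tail-equivalent to $\tau^{(i)}_0$: there is no such characterisation. Passing subexponentiality from a convolution to its factors is the notoriously delicate convolution-root problem; the Embrechts--Goldie closure conjecture is known to fail in general, even for i.i.d.\ summands, and for your non-identically distributed $T_k$ there is certainly no off-the-shelf result, nor does tail-equivalence of a single summand with the whole sum follow even when that summand happens to be subexponential. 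Second, the upward propagation you defer (``a converse to Lemma \ref{subexp16} for compound geometric sums'') is a statement of exactly the same convolution-root type: it is not among Lemmas \ref{subexp14}--\ref{subexp16}, you give no proof, and it cannot be waved through as ``standard''. Without these two steps you cannot reach any level above $k_0$ (in particular all $j>i$), and you cannot even establish the normalisation $\P(\tau^{(i)}_0>t)\sim\beta_i\overline{F_1}(t)$ unless $k_0=i$. So, as written, the proof rests on two unproved implications, the first of which is false as a general principle.

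The paper's proof takes a different route precisely to avoid needing any such converse: it never decomposes into per-level passage times and never needs subexponentiality of anything except the hypothesised $\tau^{(i)}_0$. It conditions on where $X$ exits the finite set $\{1,\ldots,n-1\}$: the exit time $\sigma_n$ has an exponential tail, the exit is to $n$ with probability $\beta_\cdot/\beta_n$ (optional stopping with the harmonic $\beta$), and the skip-free identities $\tau^{(j)}_0=\sigma_i+1_A\tau^{(i)}_0$ for $j\le i$ and $\tau^{(i)}_0=\sigma_n+1_A\tau^{(n)}_0$ for $n>i$ then deliver the tail ratios via Lemma \ref{subexp14}, with the known heavy tail of $\tau^{(i)}_0$ swallowing the exponentially light exit times. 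To repair your argument you would have to prove the compound-geometric converse in this setting (and the extraction step), which is substantially harder than the theorem itself; the economical fix is to replace the per-level decomposition by the exit-time decomposition above.
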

\begin{proof}
Notice that, since $\tai_0$ is subexponential, it follows that $\mu^{\cstate}=0$
and hence, by Theorem 5.1.1 of \cite{JR3}, there is a unique $\tP$-
harmonic $\beta$. It follows that for any $n$, $\sigma_n$, the first exit time of $X$
from the set $\{1,\ldots, n-1\}$ has an exponential tail (i.e its tail decreases to 0 at an exponential rate)
and the exit is
to $n$ with probability $\beta_i/ \beta_n$ if $X$ starts in $i$.

It follows that for each $j\leq i$,
$$\P(\tau^{(j)}_0>t)\sim \frac{\beta_j}{\beta_i} \P(\tau^{(i)}_0>t).
$$
Similarly, for $i<n$, $\tai_0=\sigma_n+1_A\tau^{(n)}_0$, where
$A=$($X$ exits $\{1,\ldots,n-1\}$  to $n$), so that
$$\P(\tai_0>t)\sim
\P(A)\P(\tau^{(n)}_0>t)=\frac{\beta_i}{\beta_n}\P(\tau^{(n)}_0>t).
$$
\end{proof}
The following is an immediate consequence of Theorems \ref{subexp3} and
\ref{subexp4}.
\begin{corollary}\label{bd2}
If $\tX$ is a birth and death process on $\Z^+$ and, for some
$i$, $\tai_0$ is subexponential, and for some $n$, $q_{0,j}=0$ for $j>n$ then the conclusion of Theorem
\ref{subexp3} holds.
\end{corollary}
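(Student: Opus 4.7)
The plan is to obtain Corollary \ref{bd2} as an immediate concatenation of Theorems \ref{subexp4} and \ref{subexp3}: the hypotheses of the corollary are engineered so that the former supplies exactly the input that the ``$a$ harmonic'' branch of the latter requires. I therefore do not expect any new estimation beyond checking that the pieces dovetail.

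First I would invoke Theorem \ref{subexp4}. Its standing hypotheses --- that $\tX$ is a recurrent birth-and-death process on $\Z^+$ and that $\tai_0$ is subexponential for some $i$ --- are precisely what the corollary provides. Its conclusion delivers $\P(\tau^{(j)}_0 > t) \sim (\beta_j/\beta_i)\,\P(\tai_0 > t)$ for every $j$, with $\beta$ the unique $\tP$-harmonic function on $\N$ normalised by $\beta_1 = 1$. In particular $\taone_0$ is subexponential (it is tail-equivalent to $\tai_0$), and dividing through gives $\P(\tau^{(j)}_0 > t) \sim \beta_j\,\P(\taone_0 > t)$ for every $j$. This is exactly the hypothesis of comparable subexponential tails needed to enter Theorem \ref{subexp3}, with the coefficients $a_j$ there identified as $a_j = \beta_j$.

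Next I would apply Theorem \ref{subexp3}. Its remaining hypothesis --- that $q_{0,j} = 0$ for all $j$ above some fixed $n$ --- is the second standing assumption of the corollary. Moreover, subexponentiality of $\tai_0$ forces a heavy tail and hence the absence of any positive exponential moment, so $\mu^\cstate = 0$ and $\phi = 0$; we are thus genuinely in the regime for which Theorem \ref{subexp3} is stated. The crucial observation is that the coefficient sequence $a = \beta$ produced above is $\tP$-\emph{harmonic}, not merely superharmonic: this is an explicit assertion of Theorem \ref{subexp4}. Consequently the ``$a$ harmonic'' clause of Theorem \ref{subexp3} applies, and one obtains weak (rather than merely vague) convergence of $\tX^T$ restricted to $\F_s$ to $\Xinf$ restricted to $\F_s$, where $\Pinf$ is the stochastic $h$-transform of $\tP$ built from $h_j = 1 + \beta_j/((e^{\cq}-1)m)$ on $\cstate$ together with the prescribed holding-sector profile.

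The proof is therefore essentially bookkeeping, and I do not anticipate a genuine obstacle. The only subtlety is the choice of reference state: Theorem \ref{subexp4} is phrased with an arbitrary $i$ for which $\tai_0$ is subexponential, while Theorem \ref{subexp3} normalises against state $1$. As noted above, this is resolved at once by taking ratios, since all the $\tau^{(j)}_0$ are mutually tail-equivalent under the hypotheses of the corollary.
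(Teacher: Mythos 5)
Your proposal is correct and follows exactly the paper's route: the paper derives Corollary \ref{bd2} as an immediate consequence of Theorems \ref{subexp3} and \ref{subexp4}, with Theorem \ref{subexp4} supplying the comparable subexponential tails with coefficients $a_j=\beta_j$ ($\tP$-harmonic) and Theorem \ref{subexp3} then yielding the (weak) convergence under the finite-support condition on $q_{0,\cdot}$. Your added bookkeeping on the normalisation at state $1$ and on $\mu^{\cstate}=0$ is exactly the detail the paper leaves implicit.
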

\begin{remark}
If $\mu^{\cstate}=0$ and the process conditioned on not hitting 0 until
time $T$ converges vaguely, then the $\tai_0$'s must have comparable {\em
heavy}
tails. If, in fact the convergence is weak (i.e. to an honest process)
then the vector $a$ must be harmonic for $\tP$.

\end{remark}
\begin{remark}
Suppose that $X$ is a birth and death process, with birth rates $b_i$
equal to the corresponding death rates. If the rates are
decreasing in $i$, then $\tau^{(1)}_0$ is subexponential.

To see this, first observe that, by
conditioning on the first jump, we obtain that
$$\P(\tau^{(1)}_0>t)=\frac{1}{2}\P(E_1>t)+\frac{1}{2}\P(E_1+\tau^{(2)}_0>t),
$$
where $E_1$ is the first waiting time in state $1$.
Now, since
$$\tau^{(2)}_0=\tau^{(2)}_1+\tau^{(1)}_0
$$
and
since $\tau^{(2)}_1$ stochastically dominates $\tau^{(1)}_0$,
we obtain the desired result that
$$
\limsup_{t\rightarrow \infty}\frac{\overline{F^{(2)}}(t)}{\overline F(t)}\leq 2,
$$
where $F$ is the distribution function of $\tau^{(1)}_0$. The result now
follows by (\ref{subdef2}).
\end{remark}
\section{Some concluding remarks}
Sigman \cite{Sigman} gives some conditions which ensure that a random
variable has a subexponential tail.

Many obvious examples exist of the $\alpha$-recurrent case. We have
exhibited a few examples in the $\alpha$-transient case always assuming that $\cstate$
is irreducible. If it is not, then in principle we can divide $\cstate$
into communicating classes $\{\cstate_l:\; l\in L\}$, where $L$ is some countable or finite index set.
It is easy to show
that
$$\phi\leq \inf_{l\in L} \mu^{\cstate_l}.
$$
By adapting the proof of Theorem \ref{subexp1}, it is easy to see that
if $\tau$ is subexponential but $\mu^{\cstate_l}>0$ for some $l\in L$,
then $\frac{s_i(t-v)}{s_j(t)}\tto 1$ for $i,j\in
\cstate_l\cup\{\{0\}\times [0,1)$ and so, as in Theorem \ref{subexp2},
weak convergence of the conditioned chains is not
possible if each $\mu^{\cstate_l}>0$.
Conversely, if $\min\limits_{l\in L} \mu^{\cstate_l}=\mu^{\cstate_{l^*}}$ and $X$ restricted to
$\cstate_{l^*}$ is $\alpha$-recurrent then $\phi=\mu^{\cstate_{l^*}}$
and a suitably adapted version of Theorem \ref{rec1} and Corollary
\ref{cor1} will apply.

\end{document}